\newtheorem{thm}{Theorem}
\newtheorem{prop}{Proposition}
\newtheorem{cor}{Corollary}
\newtheorem{conj}{Conjecture}
\theoremstyle{definition}
\newtheorem{defn}{Definition}
\newtheorem{example}{Example}
\title[]{Monotonicity for generalized binomial coefficients and Jack positivity}
\author[]{Hong Chen \thanks{\href{mailto:hc813@math.rutgers.edu}{hc813@math.rutgers.edu}}\addressmark{1} \and Siddhartha Sahi\thanks{\href{mailto:sahi@math.rutgers.edu}{sahi@math.rutgers.edu}}\addressmark{1}}
\address{\addressmark{1}Department of Mathematics, Rutgers University, NJ, USA}
\abstract{%
	Binomial formulas for Schur polynomials and Jack polynomials were studied by Lascoux in 1978, and Kaneko, Okounkov--Olshanski and Lassalle in the 1990s.
	We prove that the associated binomial coefficients are monotone and derive some symmetric function inequalities, in particular, a Schur positivity and Jack positivity result.
	These inequalities are similar to those studied by Newton, Muirhead, Gantmacher, Cuttler--Greene--Skandera, Sra and Khare--Tao.
%	This paper is the extended abstract of the pre-print \href{https://arxiv.org/abs/2403.02490}{arXiv:2403.02490}.
}
\keywords{Symmetric functions, Schur polynomials, Jack polynomials, binomial coefficients, Schur positivity}
\def\({\left(}
\def\){\right)}
\def\<{\langle}
\def\>{\rangle}
\newcommand{\=}{\mathrel{\phantom{=}}}
\def\Z{\mathbb Z}
\def\Q{\mathbb Q}
\def\R{\mathbb R}
\def\F{\mathbb F}
\def\fp{\mathbb F_{\geqslant0}}
\def\fpp{\mathbb F_{>0}}
\def\AJ{{A\mathrm{J}}}
\def\monic{{\mathrm{monic}}}
\def\geq{\geqslant}
\def\leq{\leqslant}
\newcommand{\wm}{\mathrel{\succcurlyeq_\mathrm{w}}}
\newcommand{\m}{\mathrel{\succcurlyeq}}
\newcommand*{\SetSuchThat}[1][]{} % reserve the name
\newcommand*{\MvertSets}{%
	\renewcommand*\SetSuchThat[1][]{%
		\mathclose{}%
		\nonscript\;##1\vert\penalty\relpenalty\nonscript\;%
		\mathopen{}%
	}%
}
\DeclarePairedDelimiterX \Set [2] {\lbrace}{\rbrace}
{\,#1\SetSuchThat[\delimsize]#2\,}
\newcommand{\mydef}[1]{{\color{blue}\textit{#1}}}
\begin{document}

\maketitle
%% note that you DO NOT have to put your abstract here -- it is generated by \maketitle and the \abstract and \resume commands above

\section{Introduction}\label{sec:intro}
Schur polynomials are ubiquitous in math, in particular, in algebra, representation theory and combinatorics.  
They form an orthonormal basis of the ring of symmetric polynomials, correspond to the irreducible (polynomial) representations of the symmetric group and the general linear group, and are the generating functions of semi-standard Young tableaux \cite{Sagan91,Stanley,Mac15,}.

A symmetric polynomial is called \mydef{Schur positive}, if its expansion coefficients in terms of Schur polynomials are all non-negative.
Schur positivity has long been a curious and difficult question.
The first Schur positivity result is that products of Schur polynomials are Schur positive via the celebrated \mydef{Littlewood--Richardson rule}:
\begin{align*}
	s_\mu s_\nu = \sum_\lambda c_{\mu\nu}^\lambda s_\lambda,\quad  c_{\mu\nu}^\lambda\geq0.
\end{align*}
There are many proofs and combinatorial interpretations of this rule, e.g., \cite{BZ92,KT99,Stanley,Mac15}.

\mydef{Jack polynomials} are a one-parameter deformation of Schur polynomials, defined over the base field $\F=\Q(\tau)$.
The Jack polynomial $P_\lambda(x;\tau)$ reduces to the Schur polynomial $s_\lambda(x)$ when $\tau=1$ (and to many other bases for other $\tau$).
Jack polynomials originated from multivariate statistics \cite{Jack,Muirhead82}, but later turned out to have rich combinatorial structures \cite{St89,KSinv,Mac15,DD23} and play an important role in the study of Selberg integrals \cite{Kan93,Kad97}, the Calogero--Sutherland model \cite{LV96} and random partitions \cite{Oko05}.

\subsection{Symmetric function inequalities}
In this extended abstract, we prove the following Schur positivity and Jack positivity result. 
For the full details and proofs, see \cite{CS24}.
\begin{thm}\label{thm:1}
	Let $\lambda$ and $\mu$ be partitions of length at most $n$, $x=(x_1,\dots,x_n)$ and $\bm1=(1^n)$. Then the following are equivalent:
	\begin{enumerate}
		\item $\lambda\supseteq\mu$, i.e., $\lambda_i\geq\mu_i$, $1\leq i\leq n$.
		\item $\displaystyle \frac{s_\lambda(x+\bm1)}{s_\lambda(\bm1)}-\frac{s_\mu(x+\bm1)}{s_\mu(\bm1)}$ is \textbf{Schur positive}.
		\item $\displaystyle \frac{P_\lambda(x+\bm1;\tau)}{P_\lambda(\bm1;\tau)}-\frac{P_\mu(x+\bm1;\tau)}{P_\mu(\bm1;\tau)}$ is \textbf{Jack positive}.
	\end{enumerate}
	Here, the Jack positivity is over the cone  $\fp=\Set{f/g\in\Q[\tau]}{f,g\in\Z_{\geq0}[\tau],g\neq0}\subset\F$.
\end{thm}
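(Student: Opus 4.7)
The plan is to prove the cyclic chain $(1)\Rightarrow(3)\Rightarrow(2)\Rightarrow(1)$ by passing to the \emph{generalized Jack binomial coefficients} $\binom{\lambda}{\nu}_\tau$ of Lascoux, Kaneko, Lassalle, and Okounkov--Olshanski, which are defined by
\begin{equation*}
\frac{P_\lambda(x+\bm1;\tau)}{P_\lambda(\bm1;\tau)} = \sum_{\nu\subseteq\lambda} \binom{\lambda}{\nu}_\tau \frac{P_\nu(x;\tau)}{P_\nu(\bm1;\tau)},
\end{equation*}
and satisfy $\binom{\lambda}{\nu}_\tau \neq 0$ exactly when $\nu \subseteq \lambda$, with $\binom{\lambda}{\lambda}_\tau = 1$.

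For $(1)\Rightarrow(3)$, the difference in (3) expands as
\begin{equation*}
\sum_\nu \Bigl(\binom{\lambda}{\nu}_\tau - \binom{\mu}{\nu}_\tau\Bigr) \frac{P_\nu(x;\tau)}{P_\nu(\bm1;\tau)}.
\end{equation*}
Each $P_\nu(x;\tau)$ is trivially Jack positive, and the product formula for $P_\nu(\bm1;\tau)$ shows $1/P_\nu(\bm1;\tau) \in \fp$, so every ratio $P_\nu/P_\nu(\bm1)$ is Jack positive over $\fp$. Thus $(1)\Rightarrow(3)$ reduces to the \emph{monotonicity of binomial coefficients}: $\binom{\lambda}{\nu}_\tau - \binom{\mu}{\nu}_\tau \in \fp$ whenever $\lambda \supseteq \mu$. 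Implication $(3)\Rightarrow(2)$ is then immediate by setting $\tau = 1$, under which $P_\lambda \mapsto s_\lambda$ and every element of $\fp$ becomes a nonnegative rational. For the converse $(2)\Rightarrow(1)$, I would extract the coefficient of $s_\mu(x)$ in the Schur-positive difference; by the $\tau = 1$ binomial expansion this coefficient equals $(\binom{\lambda}{\mu}_1 - 1)/s_\mu(\bm1)$, so Schur positivity forces $\binom{\lambda}{\mu}_1 \geq 1$, which in turn forces $\mu \subseteq \lambda$ by the support property of the binomial coefficients.

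The main obstacle is the monotonicity claim in $(1)\Rightarrow(3)$. By telescoping along a chain of single-box covers $\mu = \nu^{(0)} \coveredby \nu^{(1)} \coveredby \cdots \coveredby \nu^{(k)} = \lambda$, it suffices to treat the case $\mu \coveredby \lambda$. For this, one invokes an explicit description of $\binom{\lambda}{\nu}_\tau$ --- the shifted-Jack-polynomial formula of Okounkov--Olshanski, a Pieri-type recursion, or Lassalle's formulas --- and manipulates the one-box difference into a manifestly $\fp$-positive form. The technical difficulty is that the standard formulas for $\binom{\lambda}{\nu}_\tau$ are signed alternating sums involving arm- and leg-length factors in denominators; producing a positive regrouping of the one-box difference is expected to require either a new combinatorial model tailored to the single-box comparison or a delicate algebraic cancellation argument.
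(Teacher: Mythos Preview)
Your proposal matches the paper's approach: the equivalences are derived from the binomial expansion exactly as you describe, and the core monotonicity claim (Theorem~\ref{thm:2} here) is likewise reduced to a single-box cover. For that step the paper commits specifically to Okounkov's tableau-sum formula \eqref{eqn:AJ-comb} for $h_\nu^{\monic}$---a \emph{positive} sum over reverse tableaux with weights $\psi_T(\tau)\in\fpp$, not a signed alternating sum---and compares $h_\nu^{\monic}(\overline\lambda)$ with $h_\nu^{\monic}(\overline\mu)$ tableau by tableau, where only the linear factors with $T(s)=i_0$ differ; so the obstacle is milder than you anticipate.
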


\begin{example}
	Write $S(\lambda)(x)=s_\lambda(x)/s_\lambda(\bm1)$ and $\widetilde S(\lambda)(x)=S(\lambda)(x+\bm1)$ for Schur polynomials and similarly $P^*(\lambda)$ and $\widetilde P^*(\lambda)$ for Jack polynomials, then we have 
	\newcommand{\mysizeone}{0.4}
	\begin{align*}%{8}
		\widetilde S({\scalebox{\mysizeone}{\ydiagram{3,1}}})
		&&-&&\widetilde S({\scalebox{\mysizeone}{\ydiagram{2}}})
		&&=&&S({\scalebox{\mysizeone}{\ydiagram{3,1}}})
		&&+&&\tfrac43 S({\scalebox{\mysizeone}{\ydiagram{3}}})
		&&+&&\tfrac83 S({\scalebox{\mysizeone}{\ydiagram{2,1}}})
		\\&&&&&&+&&3S({\scalebox{\mysizeone}{\ydiagram{2}}})
		&&+&&2S({\scalebox{\mysizeone}{\ydiagram{1,1}}})
		&&+&&2S({\scalebox{\mysizeone}{\ydiagram{1}}})
		\;;
		\\
		\widetilde P^*({\scalebox{\mysizeone}{\ydiagram{3,1}}})
		&&-&&\widetilde P^*({\scalebox{\mysizeone}{\ydiagram{2}}})
		&&=&&P^*({\scalebox{\mysizeone}{\ydiagram{3,1}}})
		&&+&&\tfrac{2\tau+2}{\tau+2}P^*({\scalebox{\mysizeone}{\ydiagram{3}}})
		&&+&&\tfrac{2\tau+6}{\tau+2}P^*({\scalebox{\mysizeone}{\ydiagram{2,1}}})
		\\&&&&&&+&&\tfrac{4\tau+2}{\tau+1}P^*({\scalebox{\mysizeone}{\ydiagram{2}}})
		&&+&&\tfrac{\tau+3}{\tau+1}P^*({\scalebox{\mysizeone}{\ydiagram{1,1}}})
		&&+&&2P^*({\scalebox{\mysizeone}{\ydiagram{1}}})
		\;.
	\end{align*}
	All coefficients are in $\Q_{\geq0}$ or $\fp$. 
\end{example}
Note that when $\tau=1$, the coefficients for Jack polynomials specialize to those for Schur polynomials.
By specializing $\tau$ to other values, one can get monomial positivity, elementary positivity and Zonal positivity results, see \cref{thm:CS-ineq}.

Our Schur positivity (and monomial positivity and elementary positivity) result is closely related to the inequalities studied by Cuttler--Greene--Skandera, Sra and Khare--Tao in \cite{CGS11,Sra16,KT21}, which we now recall.
\begin{prop}\label{prop:1}
	Let $\lambda$, $\mu$, $x$, and $\bm1$ be as in \cref{thm:1}.
	\begin{enumerate}
		\item (CGS--Sra) Suppose $|\lambda|=|\mu|$. Then $\lambda$ dominates $\mu$ if and only if 
		\begin{align*}\label{eqn:CGS-Schur}
			\dfrac{s_\lambda(x)}{s_\lambda(\bm1)}-\dfrac{s_\mu(x)}{s_\mu(\bm1)}\geq0,\quad x\in[0,\infty)^n.
		\end{align*}
		\item (KT) $\lambda$ weakly dominates $\mu$ if and only if 
		\begin{align*}
			\dfrac{s_\lambda(x+\bm1)}{s_\lambda(\bm1)}-\dfrac{s_\mu(x+\bm1)}{s_\mu(\bm1)}\geq0,\quad x\in[0,\infty)^n.
		\end{align*}
	\end{enumerate}
	Here $\lambda$ \mydef{weakly dominates} $\mu$ means that $\lambda_1+\cdots+\lambda_i\geq\mu_1+\cdots+\mu_i$ for $1\leq i\leq n$; and $\lambda$ \mydef{dominates} $\mu$ means, in addition, that $|\lambda|=|\mu|$.
\end{prop}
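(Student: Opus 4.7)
The plan is to treat each direction of each part separately: an asymptotic argument for the reverse directions (pointwise inequality implies (weak) dominance), and a reduction to covering relations for the forward directions.

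For the reverse directions, I would substitute $x = (T^{a_1}, \ldots, T^{a_n})$ with $a_1 > \cdots > a_n > 0$ and let $T \to \infty$. From the semistandard-tableau formula $s_\lambda(x) = \sum_T x^T$, the leading $T$-exponent of $s_\lambda$ is $\sum_i a_i \lambda_i$, achieved uniquely by the tableau whose $i$-th row is filled with $i$'s. Since $s_\lambda(\mathbf{1})$ and $s_\mu(\mathbf{1})$ are $T$-independent positive constants, the pointwise inequality forces $\sum_i a_i \lambda_i \geq \sum_i a_i \mu_i$ for every strictly decreasing positive sequence $a$. By Abel summation and continuity, this is equivalent to $\sum_{i \leq k}(\lambda_i - \mu_i) \geq 0$ for every $1 \leq k \leq n$ --- this is weak dominance in part (2), and combined with $|\lambda| = |\mu|$ it gives dominance in part (1). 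The shift to $x + \mathbf{1}$ in part (2) is asymptotically negligible as $T \to \infty$, so the same argument applies.

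For the forward directions, I would reduce to the covering relations of the respective orders. The dominance order on partitions of a fixed size is generated by the elementary box-move $\lambda \mapsto \lambda - e_i + e_j$ with $i < j$, while the weak-dominance order is additionally generated by the size-increasing move that appends one box in the last nonempty row. It thus suffices to verify each inequality when $\mu$ is an immediate predecessor of $\lambda$. For the box-move, I would try to exhibit the difference $s_\mu(\mathbf{1})\, s_\lambda(x) - s_\lambda(\mathbf{1})\, s_\mu(x)$ as a manifestly nonnegative symmetric function on $[0,\infty)^n$, for instance via a sign-reversing involution on pairs of semistandard tableaux, or by comparing coefficients in the monomial symmetric function basis and invoking the Kostka-number inequalities that underlie majorization. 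The box-addition case in part (2) should reduce, via the Pieri rule together with the hook-content formula for $s_\lambda(\mathbf{1})$, to a direct inequality between normalized one-row factors.

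The main obstacle is carrying out the single covering step in the forward direction. Even restricted to one elementary move, the argument couples the combinatorial change in $\lambda$ with the shape-dependent normalization $s_\lambda(\mathbf{1})$, which is a product of hook contents; a clean proof requires a bijective or analytic argument that handles both simultaneously. In part (2) there is the additional subtlety that the shift $x + \mathbf{1}$ mixes partitions of different sizes via the binomial expansion $s_\lambda(x + \mathbf{1}) = \sum_\nu \binom{\lambda}{\nu}\, s_\nu(x)$, so the argument has to engage with the generalized binomial coefficients --- precisely the objects whose monotonicity drives the rest of the paper.
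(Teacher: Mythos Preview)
The paper does not give its own proof of this proposition; it is quoted from the literature, with the attributions (CGS--Sra) and (KT) built into the statement. The surrounding text records the methods used in the original sources: Sra proved the forward direction of~(1) using the Harish-Chandra--Itzykson--Zuber integral together with the AM--GM inequality, and Khare--Tao proved~(2) using a Schur positivity result of Lam--Postnikov--Pylyavskyy. The only argument the paper itself supplies is a \emph{reduction} of~(2) to~(1) plus Theorem~\ref{thm:1}, via the factorization of weak dominance as ``$\lambda\supseteq\nu$ and $\nu\succcurlyeq\mu$'' (Corollary~\ref{cor:KT}).

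Against that backdrop, here is how your proposal compares.

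\textbf{Reverse directions.} Your asymptotic argument with $x_i=T^{a_i}$, $a_1>\cdots>a_n>0$, is correct and standard; it recovers weak dominance from the leading $T$-exponent and, under $|\lambda|=|\mu|$, dominance. The shift by $\bm1$ in part~(2) is indeed asymptotically negligible. This matches the easy halves in the cited papers.

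\textbf{Forward directions.} This is where your proposal remains a plan rather than a proof, and the plan diverges from what is known to work. Reducing to covers is fine, but even for a single dominance cover the inequality $s_\mu(\bm1)\,s_\lambda(x)\geq s_\lambda(\bm1)\,s_\mu(x)$ on $[0,\infty)^n$ is not known to follow from a sign-reversing involution or from Kostka-number monotonicity; the difference is \emph{not} monomial-positive in general, so a coefficientwise comparison in the $m_\nu$ basis cannot succeed. This is exactly why Sra's proof goes through the HCIZ integral. Your description of the weak-dominance covers is also off: the size-increasing covers are not ``append a box in the last nonempty row'' but rather adding a box in a single row (and any such addition factors through dominance covers and the cover $\mu\mapsto\mu+e_1$). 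More to the point, if you want an argument in the spirit of this paper, the natural route to~(2) is not a separate covering analysis but the decomposition used in Corollary~\ref{cor:KT}: write $\lambda\succcurlyeq_{\mathrm w}\mu$ as $\lambda\supseteq\nu\succcurlyeq\mu$, apply Theorem~\ref{thm:CS-ineq} to the pair $(\lambda,\nu)$, and apply~(1) (taken as input from Sra) to the pair $(\nu,\mu)$.

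In short: your reverse implications are fine; your forward implications are genuinely incomplete, and the approaches you sketch are not the ones that are known to close the gap.
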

In \cite{CGS11}, the authors studied inequalities of means for various symmetric polynomials.
The first inequality above, in the case of the monomial symmetric function, is the classical Muirhead's inequality; in the case of elementary symmetric function and power-sum, generalizes some inequalities due to Newton and Gantmacher, respectively.
They made a conjecture for Schur polynomials, which was later proved by Sra in \cite{Sra16} using the HCIZ integral and the AM--GM inequality.

In \cite{KT21}, the authors studied entrywise-functions that preserve the positive semidefiniteness of matrices and provided a complete characterization of the sign patterns of the higher-order Maclaurin coefficients of such functions. Using a Schur positivity result of Lam--Postnikov--Pylyavskyy, they turned from a qualitative existence result to quantitative bounds on the coefficients. As an application, they gave a new characterization of weak majorization in terms of Schur polynomials (the second part of \cref{prop:1}).

Each of statements in \cref{thm:1,prop:1} is a characterization of certain partial order on partitions.
The two inequalities in \cref{prop:1} are \mydef{evaluation positivity}, while in \cref{thm:1}, we have \mydef{expansion positivity}.
Since Schur polynomials are evaluation positive, it follows that the difference in \cref{thm:1} is also positive when evaluating at the positive orthant $[0,\infty)^n$.
In fact, \cref{thm:1}, together with the first inequality in \cref{prop:1}, implies the second one, see \cref{sec:ineq}.

\subsection{Binomial formulas}
The numerator $s_\lambda(x+\bm1)$ in \cref{thm:1} was first considered by Lascoux in \cite{Lascoux} in order to compute the Chern classes of the exterior and symmetric squares of a vector bundle, see also \cite[P.47~Example~10]{Mac15}.
This is a natural generalization of the well-known Newton's binomial formula to symmetric polynomials.

Later, in the 1990s, binomial formulas for Schur polynomials, Jack polynomials, Macdonald polynomials (and their non-symmetric counterparts) and Koornwinder polynomials were studied by Lassalle, Kaneko, Sahi, Okounkov--Olshanski and others, in \cite{Las90,Kan93,OO-schur,OO97,Oko98-Mac,Sahi98}. See also \cite{Koo15} for a good survey.

It was shown in \cite{OO97} for Jack polynomials that 
\begin{align*}
	\frac{P_\lambda(x+\bm1;\tau)}{P_\lambda(\bm1;\tau)}
	= \sum_\nu \binom{\lambda}{\nu}_\tau \frac{P_\nu(x;\tau)}{P_\nu(\bm1;\tau)},
\end{align*}
where the \mydef{generalized binomial coefficients} $\binom{\lambda}{\nu}_\tau$ are given by evaluations of the \mydef{interpolation Jack polynomials}, first studied in \cite{Sahi94,KS96}.
Later in \cite{Sahi-Jack}, it was shown that $\binom{\lambda}{\nu}_\tau$ is always positive (that is, in $\fp$).
In this paper, we prove that $\binom{\lambda}{\nu}_\tau$ is increasing in $\lambda$, which implies \cref{thm:1}. 
See \cref{sec:mono} for a sketch of the proof.
\begin{thm}\label{thm:2}
	If $\lambda\supseteq\mu$, then $\binom{\lambda}{\nu}_\tau-\binom{\mu}{\nu}_\tau\in\fp$ for all $\nu$.
\end{thm}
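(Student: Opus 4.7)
The plan is to telescope the difference and reduce the general containment to a sequence of single-box extensions. Since $\lambda\supseteq\mu$, one can choose a chain of partitions
\[
\mu = \lambda^{(0)} \subsetneq \lambda^{(1)} \subsetneq \cdots \subsetneq \lambda^{(k)} = \lambda,
\]
all of length at most $n$, with each $\lambda^{(i+1)}/\lambda^{(i)}$ a single box. Telescoping
\[
\binom{\lambda}{\nu}_\tau - \binom{\mu}{\nu}_\tau = \sum_{i=0}^{k-1}\left[\binom{\lambda^{(i+1)}}{\nu}_\tau - \binom{\lambda^{(i)}}{\nu}_\tau\right]
\]
reduces the claim, via closure of $\fp$ under addition, to the covering case $\lambda = \mu + \square$ for a single addable box $\square$.

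For this covering case, I would exploit the identification $\binom{\lambda}{\nu}_\tau = P_\nu^*(\lambda;\tau)/P_\nu^*(\nu;\tau)$, where $P_\nu^*$ denotes the interpolation Jack polynomial. The target is a Pascal-type recurrence
\[
\binom{\mu+\square}{\nu}_\tau - \binom{\mu}{\nu}_\tau = \sum_{\nu^-} c_{\nu,\nu^-}(\mu;\tau)\,\binom{\mu}{\nu^-}_\tau,
\]
in which $\nu^-$ ranges over partitions obtained from $\nu$ by deleting a single removable box and each coefficient $c_{\nu,\nu^-}(\mu;\tau)$ lies in $\fp$. Once this is in hand, Sahi's positivity theorem guarantees $\binom{\mu}{\nu^-}_\tau\in\fp$, and the closure of $\fp$ under addition and multiplication finishes the argument. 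Such a recurrence is the natural $\tau$-deformation of $\binom{n+1}{k}-\binom{n}{k}=\binom{n}{k-1}$, and I expect to derive it by combining the vanishing characterization $P_\nu^*(\rho;\tau)=0$ for $\rho\not\supseteq\nu$ with a Pieri-type identity in the shifted Jack framework, expressing the left-hand side as a linear combination of the $P_{\nu^-}^*(\mu;\tau)$ and computing the coefficients explicitly in terms of arm/leg-length data.

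The main obstacle is to verify that every $c_{\nu,\nu^-}(\mu;\tau)$ actually lies in $\fp$. These coefficients are rational functions of $\tau$ built from arm- and leg-length factors of the added box $\square$ and the removed box $\nu/\nu^-$, and there is no reason for the individual terms arising in a naive derivation to be $\fp$-valued even when their total is. I would attack this by grouping terms so that each block is manifestly positive, for instance by identifying the grouped sum with a normalized evaluation of a skew interpolation Jack polynomial, which is known to be in $\fp$. An alternative route is Okounkov's combinatorial formula writing $\binom{\lambda}{\nu}_\tau$ as a weighted sum over reverse tableaux of shape $\nu$ bounded by $\lambda$; under this formula the difference $\binom{\mu+\square}{\nu}_\tau - \binom{\mu}{\nu}_\tau$ counts precisely those tableaux that occupy the new box, and the task reduces to checking that the local tableau weights are $\fp$-valued in $\tau$.
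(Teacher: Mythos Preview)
Your telescoping reduction to the covering case $\lambda=\mu+\square$ (a single box added in some row $i_0$) is exactly the paper's first step. From there, the paper follows what you list as the ``alternative route'' via the tableau formula, but your description of how that route works is off. The $\AJ$ formula in \cref{prop:tab} expresses $h_\nu^\monic(x;\tau)$ as a sum over \emph{all} reverse tableaux $T$ of shape $\nu$ and rank $n$; there is no ``bounded by $\lambda$'' constraint, and $\lambda$ enters only through the evaluation point $x=\overline\lambda$. Hence the difference $h_\nu^\monic(\overline\lambda;\tau)-h_\nu^\monic(\overline\mu;\tau)$ is not supported on ``tableaux that occupy the new box'': the \emph{same} tableaux appear in both sums. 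The paper's observation is rather that $\overline\lambda$ and $\overline\mu$ differ only in the $i_0$-th coordinate, so for each fixed $T$ the two products agree in every factor except those with $T(s)=i_0$; one then checks directly that the resulting per-tableau difference lies in $\fp$. Since $\psi_T(\tau)\in\fpp$ and the normalizing constant $H_\nu(\tau)=h_\nu^\monic(\overline\nu;\tau)\in\fpp$, the claim follows.

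Your primary route, a Pascal-type recurrence expressing $\binom{\mu+\square}{\nu}_\tau-\binom{\mu}{\nu}_\tau$ as an $\fp$-combination of $\binom{\mu}{\nu^-}_\tau$, is a genuinely different strategy that the paper does not pursue. Pieri-type identities of this shape do exist for interpolation Jack polynomials, but as you yourself flag, the crux would be proving each coefficient $c_{\nu,\nu^-}(\mu;\tau)\in\fp$, and your suggestions for this (regrouping terms, or identification with a skew interpolation evaluation) remain programmatic rather than a proof. The paper's tableau-by-tableau comparison sidesteps this obstacle entirely and is considerably more direct.
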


\section{Background}\label{sec:background}
Throughout the paper, let $n\geq1$ be the number of variables and $x=(x_1,\dots,x_n)$.
Recall the following notions in \cite{Sagan91,Stanley,Mac15}.
\subsection{Partitions}
A \mydef{partition} of length at most $n$ is a tuple $\lambda=(\lambda_1,\dots,\lambda_n)\in\Z^n$ such that $\lambda_1\geq\lambda_2\geq\cdots\geq\lambda_n\geq0.$
%Each nonzero component $\lambda_i$ is called a \emph{part}.
The \mydef{size} of $\lambda$ is $|\lambda|=\lambda_1+\lambda_2+\dots+\lambda_n$, and the \mydef{length} $\ell(\lambda)$ is the number of nonzero components.
Let $m_i(\lambda)$ be the number of components equal to $i$ in $\lambda$.
Let $\mathcal P_n$ be the set of all partitions of length at most $n$.

For two partitions $\lambda$ and $\mu$, consider the following partial orders:

We say $\lambda$ \mydef{contains} $\mu$ and write $\lambda\supseteq\mu$ if $\lambda_i\geq\mu_i$ for $1\leq i\leq n$. 

We say $\lambda$ \mydef{weakly dominates} or \mydef{weakly majorizes} $\mu$ and write $\lambda\wm\mu$ if $\sum_{i=1}^k\lambda_i\geq\sum_{i=1}^k\mu_i$ for $1\leq k\leq n$. 

We say $\lambda$ \mydef{dominates} or \mydef{majorizes} $\mu$ and write $\lambda\geq\mu$ if $|\lambda|=|\mu|$ and $\lambda\wm\mu$.

\subsection{Diagrams and tableaux}
For a partition $\lambda$, we usually identify it with its \mydef{Young diagram} (or Ferrers diagram) , which is a set of lattice points (usually drawn as boxes),
$	\lambda=\Set{(i,j)\in\Z^2}{1\leq j\leq \lambda_i,1\leq i\leq l(\lambda)}.$
A \mydef{tableau} $T$ of shape $\lambda$ and rank $N$ is a function $T:\lambda\to\{1,2,\dots,N\}$. 
A \mydef{semi-standard reverse tableau} (RT, for short) is one such that $T(i,j)>T(i+1,j)$ and $T(i,j)\geq T(i,j+1)$ for $(i,j),(i+1,j),(i,j+1)\in\lambda$.

For example, for the partition $(3,2)$, its diagram and all RTs of rank $2$ are as follows:
\newcommand{\mysizetwo}{0.5}
\begin{align*}
	\scalebox{\mysizetwo}{\ydiagram{3,2}}\ ,\quad \scalebox{\mysizetwo}{\ytableaushort{221,11}}\ ,\quad \scalebox{\mysizetwo}{\ytableaushort{222,11}}\ .
\end{align*}

The \mydef{conjugate} of a partition $\lambda$ is the partition $\lambda'$ associated to the transpose of the diagram of $\lambda$.
In the example above, $(3,2)'=(2,2,1)$.

\subsection{Symmetric polynomials}
Denote by $\Lambda=\Lambda_n\coloneqq\Z[x_1,\dots,x_n]^{S_n}$ the ring of symmetric polynomials over $\Z$ in $n$ variables and $\Lambda_\F=\Lambda\otimes_\Z\F$.

The \mydef{monomial symmetric polynomial} $m_\lambda$ is defined as
$m_\lambda = \sum_{\alpha\sim\lambda} x^\alpha$,
where the sum runs over permutations of $\lambda$ and $x^\alpha = x_1^{\alpha_1}x_2^{\alpha_2}\cdots x_n^{\alpha_n}$.

For each $r\geq1$, the \mydef{elementary symmetric polynomial} $e_r$ and the \mydef{power-sum} $p_r$ are 
\begin{align}
	e_r = m_{(r)}=\sum_{1\leq i_1<\cdots<i_r\leq n} x_{i_1}\cdots x_{i_r},\quad p_r = m_{(1^r)} = \sum_{1\leq i\leq n} x_i^r.
\end{align}
Define multiplicatively, $e_\lambda = e_{\lambda_1}\cdots e_{\lambda_l}$ and $p_\lambda=p_{\lambda_1}\cdots p_{\lambda_l}$, where $l=\ell(\lambda)$.

The \mydef{Schur polynomial} $s_\lambda$ can be defined by the bialternant formula
\begin{align}
	s_\lambda = \frac{\det(x_i^{\lambda_j+n-j})_{1\leq i,j\leq n}}{\det(x_i^{n-j})_{1\leq i,j\leq n}}.
\end{align}

The \mydef{Hall inner product} on $\Lambda_\Q$ is defined by 
\begin{align}
	\<p_\lambda,p_\mu\> = \delta_{\lambda\mu} z_\lambda,
\end{align}
where $z_\lambda = \prod_{i\geq1} i^{m_i(\lambda)} m_i(\lambda)!$ and $\delta_{\lambda\mu}$ is the Kronecker delta function.

It is a well-known fact that Schur polynomials form an \mydef{orthonormal} basis of $\Lambda_n$ that is \mydef{unitriangular} under the monomial basis with respect to the dominance order:
\begin{gather}
	\<s_\lambda,s_\mu\> = \delta_{\lambda\mu},	\quad
	s_\lambda = m_\lambda + \sum_{\mu<\lambda} K_{\lambda\mu} m_\mu.
\end{gather}

\mydef{Jack polynomials} are deformations of Schur polynomials.
The base field is $\F=\Q(\tau)$, the field of rational polynomials in the parameter $\tau$. (Note: our $\tau$ corresponds to $1/\alpha$ for the parameter $\alpha$ in \cite[Section VI.10]{Mac15}.)
Define an inner product on $\Lambda_\F$ by 
\begin{gather}
	\<p_\lambda,p_\mu\>_\tau = \delta_{\lambda\mu} z_\lambda \tau^{-l(\lambda)}.
\end{gather}
Then, similar to Schur polynomials, Jack polynomials are uniquely determined by the following \mydef{orthogonality} and \mydef{unitriangularity} conditions:
\begin{gather}
	\<P_\lambda(x;\tau),P_\mu(x;\tau)\>_\tau = 0, \quad \lambda\neq\mu;	\quad
	P_\lambda = m_\lambda + \sum_{\mu<\lambda} u_{\lambda\mu} m_\mu.
\end{gather}
The normalization here is sometimes called \mydef{monic}.

Jack polynomials specialize to many families.
For example, when $\tau=0$, $P_\lambda=m_\lambda$; when $\tau=1$, $P_\lambda=s_\lambda$; when $\tau=\infty$, $P_\lambda=e_{\lambda'}$.
Also, when $\tau=1/2$ or $2$, $P_\lambda$ specializes to the Zonal polynomial $Z_\lambda$, see \cite[Chapter VII]{Mac15}.
\subsection{Binomial formulas and interpolation polynomials}
The classical Newton's binomial formula states that in the \emph{univariate} case
\begin{align}
	(x+1)^k = \sum_{m\geq0} \binom{k}{m} x^m,
\end{align}
where the binomial coefficient $\binom{k}{m}=\frac{k(k-1)\cdots(k-m+1)}{m!}$ is a polynomial in $k$.

In \cite{OO97}, the following binomial formula for Jack polynomials was proved:
\begin{align}\label{eqn:AJ-binomial1}
	\frac{P_\lambda(x+\bm1;\tau)}{P_\lambda(\bm1;\tau)} = \sum_{\mu} \binom{\lambda}{\nu}_\tau \frac{P_\nu(x;\tau)}{P_\nu(\bm1;\tau)},
\end{align}
where $\bm1=(1^n)=(1,1,\dots,1)$.
The coefficients $\binom{\lambda}{\nu}_\tau$ are called the \mydef{generalized binomial coefficients}. 
They, too, are given by evaluations of certain polynomials, called \mydef{interpolation Jack polynomials} (or shifted Jack polynomials), first studied in \cite{Sahi94,KS96}.

To define the interpolation Jack polynomials, we need the following result about interpolation in \cite{KS96}.
Let $\overline\lambda_i=\lambda_i+(n-i)\tau$.
\begin{prop}\label{prop:interpolation}
	Fix $d\geqslant0$ and any function $\overline f:\Set{\lambda\in\mathcal P_n}{|\lambda|\leq d}\to\mathbb F$. 
	Then there is a unique symmetric polynomial $f\in\Lambda_\F$, of degree at most $d$, such that $f(\overline \lambda)=\overline f(\lambda)$ for $|\lambda|\leq d$.
\end{prop}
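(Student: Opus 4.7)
The plan is to reduce the proposition to injectivity of an evaluation map via a dimension count, and then certify injectivity using a triangular basis of \emph{interpolation Jack polynomials} defined combinatorially.

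For the \textbf{dimension reduction}, the monomial symmetric polynomials $\{m_\lambda : \lambda\in\mathcal P_n,\ |\lambda|\leq d\}$ form an $\F$-basis of $\Lambda_\F^{\leq d}$, the space of symmetric polynomials of degree at most $d$. Hence $\dim_\F \Lambda_\F^{\leq d} = \#\{\lambda\in\mathcal P_n : |\lambda|\leq d\}$, which equals the $\F$-dimension of the codomain $\F^{\{\lambda\in\mathcal P_n:|\lambda|\leq d\}}$ of the evaluation map $\mathrm{ev}: f \mapsto (f(\overline\lambda))_\lambda$. So existence and uniqueness are equivalent, and it suffices to show that no nonzero $f\in\Lambda_\F^{\leq d}$ vanishes at every $\overline\lambda$ with $|\lambda|\leq d$.

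Next I would construct a \textbf{triangular basis} of $\Lambda_\F^{\leq d}$ adapted to the evaluation. Independently of the proposition, one can define symmetric polynomials $P_\mu^* \in \Lambda_\F$ of degree $|\mu|$ (for $\mu\in\mathcal P_n$) by the reverse-tableau generating function of \cite{Sahi94,KS96}. From this combinatorial formula one verifies directly that $P_\mu^*$ is symmetric, has degree $|\mu|$, vanishes at $\overline\lambda$ whenever $\lambda\not\supseteq\mu$, and takes a nonzero product-formula value at $\overline\mu$. Leading-term analysis in the dominance order shows $P_\mu^* = m_\mu + (\text{lower monomials})$, so $\{P_\mu^* : |\mu|\leq d\}$ is an $\F$-basis of $\Lambda_\F^{\leq d}$.

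With this basis the conclusion is immediate: expanding $f = \sum_{|\mu|\leq d} c_\mu P_\mu^*$, the evaluations $f(\overline\lambda) = \sum_{\mu\subseteq\lambda} c_\mu P_\mu^*(\overline\lambda)$ form a triangular linear system in any ordering refining size (since $\mu\subseteq\lambda$ with $|\mu|=|\lambda|$ forces $\mu=\lambda$), with nonzero diagonal entries $P_\mu^*(\overline\mu)$. Hence $f\equiv 0$ on the evaluation points forces all $c_\mu = 0$, and $f = 0$. The \textbf{main obstacle} is entirely concentrated in the combinatorial verification of the properties of $P_\mu^*$, especially the extra vanishing $P_\mu^*(\overline\lambda)=0$ for $\lambda\not\supseteq\mu$ and the product formula for $P_\mu^*(\overline\mu)$; both are the nontrivial technical contributions of \cite{Sahi94,KS96}, but once established, they render the interpolation claim a routine triangular back-substitution.
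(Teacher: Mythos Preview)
The paper does not supply its own proof; the proposition is simply quoted from \cite{KS96}. Your dimension count is correct and the triangular-basis strategy would finish the job \emph{provided} the needed properties of $P_\mu^*$ are available independently of the proposition. That is where the difficulty lies.

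In the paper's logical order (and in \cite{Sahi94,KS96}), the proposition is established first and is then used to \emph{define} $h_\mu$ via the vanishing conditions \eqref{eqn:def-vanishing}--\eqref{eqn:def-deg}; the tableau formula \eqref{eqn:AJ-comb} is due to Okounkov, not to \cite{Sahi94,KS96}, and is proved afterward by checking that the tableau sum satisfies the defining vanishing and then invoking the uniqueness half of this very proposition to identify it with $h_\mu^{\monic}$. In particular, the \emph{symmetry} of the tableau sum is not something one ``verifies directly''---there is no evident Bender--Knuth-type argument in the interpolation Jack setting---and the standard route to symmetry is precisely the identification with the already-known-symmetric $h_\mu$, which presupposes the proposition. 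So as written your argument is circular: its hardest input (symmetry of the tableau polynomial, and hence the claim that $\{P_\mu^*:|\mu|\leq d\}$ is a basis of $\Lambda_\F^{\leq d}$) is customarily derived from the statement you are proving.

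The actual proof in \cite{KS96} avoids this entirely: it is an elementary direct argument, making no reference to the interpolation polynomials themselves, that a nonzero element of $\Lambda_\F^{\leq d}$ cannot vanish on all the shifted points $\overline\lambda$ with $|\lambda|\leq d$.
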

\begin{defn}
	The interpolation Jack polynomial $h_\mu(x;\tau)$ is the unique symmetric polynomial of degree $|\mu|$ that interpolates the Kronecker delta function (for $|\lambda|\leq|\mu|$), i.e., 
	\begin{gather}
		h_\mu(\overline\lambda;\tau) = \delta_{\lambda\mu}, \quad\forall \lambda\in\mathcal P_n,\ |\lambda|\leqslant|\mu|, \label{eqn:def-vanishing}\\
		\deg h_\mu =|\mu|\label{eqn:def-deg}.
	\end{gather}
	The generalized binomial coefficients above are given by evaluations:
	\begin{align}
		\binom{\lambda}{\mu}_\tau \coloneqq h_\mu(\overline\lambda;\tau).
	\end{align}
\end{defn}
The normalization here is called \mydef{unital} in the sense that $\binom{\mu}{\mu}=1$.
The \mydef{monic} normalization, denoted by $h_\mu^\monic$, is the one such that the coefficient of $m_\mu$ (or $P_\mu$) is 1. 

It is well-known that Schur polynomials and Jack polynomials can be given by tableau-sum formulas.
Okounkov found a similar formula for monic interpolation Jack polynomials, which we now recall. (We are following the notations in \cite{Koo15,CS24}.)
\begin{prop}\label{prop:tab}
	Schur polynomials, Jack polynomials and interpolation Jack polynomials can be given by the following tableau-sum formulas:
	\begin{alignat}{3}
		\mathrm{S}:	&\phantom{n}&&s_\lambda(x) &\quad=&\quad\sum_T \prod_{s\in\lambda} x_{T(s)},	\label{eqn:J-comb}\\
		\mathrm{J}:	&\phantom{n}&&P_\lambda(x;\tau) &\quad=&\quad\sum_T \psi_T(\tau)\prod_{s\in\lambda} x_{T(s)},	\label{eqn:J-comb}\\
		\AJ:	&&&	h_\lambda^\monic(x;\tau)	&=&\quad	\sum_T \psi_T(\tau)\prod_{s\in\lambda}\(x_{T(s)}-\(a_\lambda'(s)+(n-T(s)-l_\lambda'(s))\tau\)\),	\label{eqn:AJ-comb}
	\end{alignat}
	where $\mathrm{S}$ and $\mathrm{J}$ stand for Schur and Jack polynomials and $\AJ$ stands for (type $A$) interpolation Jack polynomials.
	Each sum runs over RTs of shape $\lambda$ and rank $n$. 
	The weight $\psi_T$, the statistics $a_\lambda'(s)$ and $l_\lambda'(s)$ can be found in \cite[Section 2]{CS24}.
\end{prop}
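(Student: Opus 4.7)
The plan is to treat the three tableau-sum formulas in increasing order of difficulty, each via a uniqueness characterization of the corresponding polynomial family. For the Schur case (S), I would derive the formula from the bialternant definition using the Lindström--Gessel--Viennot lemma: the signed sum over lattice-path systems collapses to non-intersecting systems, which biject with reverse semistandard tableaux of shape $\lambda$. For the Jack case (J), I would follow Stanley's approach via the one-row Pieri rule $P_\mu \cdot P_{(r)} = \sum_\lambda \psi_{\lambda/\mu}(\tau)\,P_\lambda$, with the sum over $\lambda$ such that $\lambda/\mu$ is a horizontal $r$-strip and $\psi_{\lambda/\mu}$ the single-strip factor dictated by the formula. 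Iterating the Pieri rule---stripping off from an RT of rank $n$ all cells labeled with the largest entry $1$, leaving an RT of shape $\mu$ with entries in $\{2,\dots,n\}$, and recursing---yields a bijective decomposition into horizontal strips whose weights multiply to $\psi_T(\tau)$, giving (J).

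For the interpolation case (AJ), I would apply Proposition~\ref{prop:interpolation}. Let $F(x)$ denote the RHS. Each summand is a product of $|\lambda|$ linear factors, so $\deg F \leq |\lambda|$, and its top-degree homogeneous component is exactly $\sum_T \psi_T(\tau)\prod_s x_{T(s)} = P_\lambda(x;\tau)$ by (J)---the correct monic leading term for $h_\lambda^\monic$. Hence it suffices to establish (i) that $F$ is symmetric in $x_1,\dots,x_n$, and (ii) that $F(\overline\mu) = 0$ for every $\mu \in \mathcal{P}_n$ with $|\mu| < |\lambda|$. Granted these, the difference $F - h_\lambda^\monic$ is a symmetric polynomial of degree strictly less than $|\lambda|$ (the top parts cancel) vanishing at all $\overline\mu$ with $|\mu| \leq |\lambda|-1$, so Proposition~\ref{prop:interpolation} with $d = |\lambda|-1$ forces $F = h_\lambda^\monic$.

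The main obstacle is proving (i) and (ii) simultaneously---neither is manifest from the RHS, since the linear factors $x_{T(s)} - (a'_\lambda(s) + (n-T(s)-l'_\lambda(s))\tau)$ depend explicitly on the tableau labels, so permuting the variables scrambles the summands individually. The cleanest route, pioneered by Knop--Sahi and Okounkov, is to lift (AJ) to the \emph{non-symmetric} setting: introduce non-symmetric interpolation polynomials given by a column-strict tableau formula whose vanishing at compositions is combinatorially transparent, and then symmetrize to recover $F$, proving symmetry as a byproduct of the symmetrization procedure. An alternative, more hands-on route is to construct a sign-reversing involution on RTs pairing terms with canceling contributions at $x = \overline\mu$, stratified by the first cell where $\mu$ deviates from $\lambda$. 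Either approach carries essentially all the combinatorial work; the degree count and leading-term identification are routine once this infrastructure is set up.
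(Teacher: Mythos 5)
The paper does not actually prove this proposition: it is recalled as background, with the Schur and Jack formulas being classical (Stanley's tableau formula for Jack polynomials) and the interpolation formula due to Okounkov, via the citations to \cite{Koo15,CS24}. So there is no internal proof to compare against; the question is whether your outline faithfully reconstructs the cited literature, and structurally it does. The (S) and (J) sketches (Lindstr\"om--Gessel--Viennot, respectively the Pieri/branching recursion obtained by stripping the cells of a fixed letter from a reverse tableau) are the standard arguments. For (AJ), your uniqueness scaffolding is sound, and there is a nice economy in it: you only need vanishing of the right-hand side $F$ at $\overline\mu$ with $|\mu|<|\lambda|$, since once the top homogeneous parts of $F$ and $h_\lambda^\monic$ cancel, the difference has degree at most $|\lambda|-1$ and \cref{prop:interpolation} with $d=|\lambda|-1$ forces it to vanish.

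However, as a proof the proposal is incomplete in exactly the places where all the content lies. First, the symmetry of $F$ and its vanishing at the points $\overline\mu$ are asserted as goals, not established; you name two strategies (the non-symmetric lift of Knop--Sahi/Okounkov, or a sign-reversing involution) but execute neither, and these two properties constitute essentially the whole of Okounkov's theorem --- the degree count and leading-term identification are, as you say, routine. Second, you implicitly use that the top homogeneous component of $h_\lambda^\monic$ equals $P_\lambda$ exactly. From the paper's definition of the monic normalization (the coefficient of $m_\lambda$, or of $P_\lambda$, equals $1$), this is not automatic: a priori the top-degree part could involve other $m_\nu$ with $|\nu|=|\lambda|$. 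That the top part is precisely $P_\lambda$ is itself a standard but nontrivial theorem of Knop--Sahi, and it must be quoted or proved before the step ``the top parts cancel'' is legitimate. In short, your write-up is an accurate roadmap of the literature that the paper itself cites in lieu of a proof, but it is a roadmap rather than a proof.
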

\begin{example}
	For example, for $\mu=(3,2)$ and $n=2$, we have
	\begin{align*}
		s_\mu=P_\mu=m_\mu=x_1^3x_2^2+x_1^2x_2^3,
	\end{align*}
	and
	\begin{align*}
		h_{\mu}^{\monic}
		&= 	x_{2}x_{1}(x_{2}-1)(x_{1}-1)(x_{1}-2-\tau )
		+x_{2}x_{1}(x_{2}-1)(x_{1}-1)(x_{2}-2) 	
		\\&=	x_{1}x_{2}(x_{1}-1)(x_{2}-1)(x_{1}+x_{2}-\tau-4).
	\end{align*}
	The definition involves the following 12 partitions
	\begin{align*}
		(0, 0), (1, 0), (1, 1), (2, 0), (2, 1), (3, 0), (2, 2), (3, 1), (4, 0), (3, 2), (4, 1), (5, 0).
	\end{align*}
	One can easily verify that $h_\mu^\monic$ indeed vanishes at $\overline\lambda=(\lambda_1+\tau,\lambda_2)$ for all but $\overline{(3,2)}$.
	One also sees that $h_{\mu}^{\monic}$ vanishes at $\overline{(m,0)}$ and $\overline{(m-1,1)}$ for $m\geq6$, more than required in the definition.
\end{example}
This phenomenon is true in general, called the \mydef{extra vanishing property}, proved in \cite{KS96}.
\begin{prop}\label{prop:extra}
	We have $\binom{\lambda}{\mu}_\tau = 0$ unless $\lambda\supseteq\mu$.
\end{prop}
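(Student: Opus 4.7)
The plan is to derive the vanishing directly from the combinatorial tableau-sum formula of Proposition \ref{prop:tab}. Since $h_\mu$ and $h_\mu^{\monic}$ differ only by a nonzero scalar, they share the same vanishing locus, so it suffices to prove that $h_\mu^{\monic}(\overline{\lambda}; \tau) = 0$ whenever $\lambda \not\supseteq \mu$.

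First I would fix such a $\lambda$ and pick the smallest index $i_0$ with $\lambda_{i_0} < \mu_{i_0}$; set $j_0 := \mu_{i_0}$. Thus the box $s_0 = (i_0, j_0)$ lies in $\mu$ but outside $\lambda$, and this row-$i_0$ mismatch is the source of the vanishing. A direct calculation shows that substituting $x_i = \overline{\lambda}_i = \lambda_i + (n-i)\tau$ into the factor $x_{T(s)} - a_\mu'(s) - (n - T(s) - l_\mu'(s))\tau$ from \eqref{eqn:AJ-comb} cancels the $n\tau$ contribution and collapses the factor into the simple linear expression $\lambda_{T(s)} - a_\mu'(s) + l_\mu'(s)\tau$.

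The heart of the proof is then a combinatorial lemma: for every RT $T$ of shape $\mu$ and rank $n$, there is a distinguished box $s = s(T)$ in column $j_0$ with row index at most $i_0$ for which the linear expression above is identically zero. The strict decrease of entries down column $j_0$ forces the top $i_0$ entries of that column to be an $i_0$-element subset of $\{1,\dots,n\}$, so some $T(k,j_0) \leq i_0$ for $k \leq i_0$. Together with the monotonicity of $\lambda$ and the minimality of $i_0$ (which gives $\lambda_k \geq \mu_k$ for $k < i_0$), this pigeonhole-type information lets one match $\lambda_{T(s)}$ against $a_\mu'(s)$ and verify $l_\mu'(s) = 0$ for an appropriate choice of $s$. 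Once the Knop--Sahi arm/leg statistics are unpacked, the argument reduces to finite bookkeeping, and the sum over $T$ vanishes term by term.

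The main obstacle is that $a_\mu'(s)$ and $l_\mu'(s)$ depend on the tableau $T$ as well as on $s$, so the choice of the vanishing box must adapt to $T$, and the case analysis could spiral. A cleaner backup is a sign-reversing involution on the RTs whose total factor is nonzero, pairing tableaux with cancelling contributions; this is the strategy that works transparently for the shifted Schur case ($\tau = 1$) via the bialternant formula, and a $\tau$-equivariant refinement should handle general Jack parameters. The complete argument is carried out in \cite{KS96,CS24}.
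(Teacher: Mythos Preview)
The paper does not supply its own proof; it states the extra vanishing and cites \cite{KS96}, whose argument is inductive (via the defining interpolation conditions and a Pieri-type recursion) rather than combinatorial. Your strategy of substituting $\overline\lambda$ into the tableau formula \eqref{eqn:AJ-comb} and exhibiting a zero factor in every tableau term is a genuinely different, more direct route, and it does succeed --- your simplification of the factor to $\lambda_{T(s)}-a_\mu'(s)+l_\mu'(s)\tau$ is correct --- but the execution as written contains real errors.

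First, $a_\mu'(i,j)=j-1$ and $l_\mu'(i,j)=i-1$ depend only on the box $s=(i,j)$, not on $T$; you can read this off the worked example for $\mu=(3,2)$ in the paper. There is no ``case analysis that could spiral'' and no need for an involution. Since the factor $\lambda_{T(s)}-(j-1)+(i-1)\tau$ vanishes identically in $\tau$ only when $i=1$, the zero box must lie in the \emph{first row}, not ``in column $j_0$ with row index at most $i_0$''. Your pigeonhole claim that some $T(k,j_0)\leq i_0$ is also false in general (take $n$ large, $i_0=2$, and a column with top entries $5,4$), and even the box $(1,j_0)$ need not give the zero: for $\mu=(2)$, $\lambda=(0)$, $n=1$ the vanishing comes from $(1,1)$, not $(1,j_0)=(1,2)$. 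The argument that actually works runs along row $1$: with $f(j)=\lambda_{T(1,j)}-(j-1)$ one has $f(1)\geq0$; strict column decrease gives $T(1,j_0)\geq i_0$, hence $f(j_0)\leq\lambda_{i_0}-\mu_{i_0}+1\leq0$; and since $T(1,\cdot)$ is weakly decreasing, $f(j{+}1)\geq f(j)-1$, so a discrete intermediate-value step yields some $j$ with $f(j)=0$, i.e.\ a zero factor at $(1,j)$.
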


\subsection{Positivity}
In this subsection, we recall the positivity cone $\fp$ for Jack polynomials \cite{Sahi-Jack,CS24}. 
The base field $\mathbb F$ is $\mathbb Q(\tau)$.
Let 
\begin{align}\label{eqn:AJ-F}
	\fp\coloneqq \Set*{\frac fg}{f,g\in\mathbb Z_{\geqslant0}[\tau],\ g\neq0}, \quad \fpp\coloneqq\fp\setminus\{0\}.
\end{align}
Then $\fp$ is a convex multiplicative cone, i.e., it is closed under addition, multiplication, and scalar multiplication by $\mathbb Q_{\geqslant0}$.
When we view $\tau$ as a real number instead of an indeterminate, we have $f(\tau)\geqslant0$ if $\tau\geq0$ for $f\in\fp$.
For example, by \cite[(VI.10.20)]{Mac15}, we have (our $\tau$ is equal to $1/\alpha$)
\begin{align}
	P_\lambda(\bm1;\tau) = \prod_{(i,j)\in\lambda} \frac{n+\alpha(j-1)-(i-1)}{\alpha(\lambda_i-j)+\lambda'_j-i+1} = \prod_{(i,j)\in\lambda} \frac{ (n-i+1)\tau+j-1}{(\lambda'_j-i+1)\tau+(\lambda_i-j)}\in\fpp. 
\end{align}
As mentioned in \cref{sec:intro}, it is proved in \cite{Sahi-Jack} that the binomial coefficients $\binom{\lambda}{\mu}_\tau$ is positive. More precisely, we have the following.
\begin{prop}[Positivity]\label{prop:pos}
	The binomial coefficient $\binom{\lambda}{\mu}_\tau\in\fpp$ if and only if $\lambda\supseteq\mu$.
\end{prop}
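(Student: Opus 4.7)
The ``only if'' direction is immediate from the extra vanishing property (Proposition~\ref{prop:extra}): if $\lambda \not\supseteq \mu$, then $\binom{\lambda}{\mu}_\tau = 0 \notin \fpp$. For the converse, assuming $\lambda \supseteq \mu$, the plan is to apply the monic tableau-sum formula from Proposition~\ref{prop:tab}, evaluate at $\overline\lambda$, argue positivity of each summand, and then pass to the unital normalization by dividing by $h_\mu^\monic(\overline\mu;\tau)$.

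The first step is the substitution $x_k = \overline\lambda_k = \lambda_k + (n-k)\tau$ into
\[
	h_\mu^\monic(x;\tau) = \sum_T \psi_T(\tau) \prod_{s \in \mu} \bigl(x_{T(s)} - a'_\mu(s) - (n - T(s) - l'_\mu(s))\tau\bigr).
\]
The $(n - T(s))\tau$ inside each factor cancels against the $(n - T(s))\tau$ coming from $\overline\lambda_{T(s)}$, so for $s = (i,j)\in\mu$ the factor becomes $\lambda_{T(i,j)} - (j-1) + (i-1)\tau$. The weights $\psi_T(\tau)$ lie in $\fp$ by inspection of their combinatorial definition, so it suffices to show that each product $\prod_{(i,j) \in \mu} \bigl(\lambda_{T(i,j)} - (j-1) + (i-1)\tau\bigr)$ lies in $\fp$.

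For each RT $T$, I plan to argue via a dichotomy. Either $\lambda_{T(i,j)} \geq j - 1$ for every $(i,j) \in \mu$, in which case every factor lies in $\fp$ and so does the product; or there is some $(i_0,j_0)$ with $\lambda_{T(i_0,j_0)} < j_0 - 1$, in which case I will show the product vanishes. Indeed, strict decrease of $T$ down column $j_0$ gives $\lambda_{T(1, j_0)} \leq \lambda_{T(i_0, j_0)} < j_0 - 1$, and a minimal-index argument in row $1$, combining its weak decrease in $T$ with $\lambda_{T(1,1)} \geq 0$, pins down an index $j^\star \leq j_0$ such that $\lambda_{T(1, j^\star-1)} = j^\star - 2$; the factor at $(1, j^\star-1)$ is then $0$, and the product vanishes. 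To rule out the sum itself collapsing to zero, I would exhibit the ``column-filling'' RT $T_0(i,j) = \mu'_j - i + 1$: since $T_0(i,j) \leq \mu'_j$ forces $\mu_{T_0(i,j)} \geq j$, each of its factors is at least $\lambda_{T_0(i,j)} - (j-1) \geq \mu_{T_0(i,j)} - (j-1) \geq 1$, so its contribution lies in $\fpp$.

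Applied with $\lambda$ replaced by $\mu$, the same argument yields $h_\mu^\monic(\overline\mu;\tau) \in \fpp$, and dividing gives $\binom{\lambda}{\mu}_\tau \in \fpp$. The main obstacle is precisely the dichotomy argument above: individual factors from rows $i \geq 2$ can have negative integer parts and are therefore not themselves in $\fp$, so term-by-term positivity in the tableau sum hinges on the delicate fact that a zero factor in row~$1$ absorbs all negatives; verifying this (and that $\psi_{T_0}(\tau) \in \fpp$ for the witness RT) is where the work lies.
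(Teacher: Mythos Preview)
The paper does not supply its own proof of this proposition; it records it as a result of \cite{Sahi-Jack}. Your argument via the tableau-sum formula is correct, and it is in the same spirit as the sketch the paper gives in Section~\ref{sec:mono} for the monotonicity result (\cref{thm:2}), which likewise evaluates \cref{eqn:AJ-comb} at $\overline\lambda$ and argues tableau by tableau.

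Your dichotomy goes through cleanly. After the substitution, every factor is a linear form $a+b\tau$ with $b=i-1\geq 0$ and $a=\lambda_{T(i,j)}-(j-1)$, so the only obstruction to $\fp$-membership is $a<0$. Setting $f(j)=\lambda_{T(1,j)}-(j-1)$, weak decrease of $T$ along row~$1$ combined with $\lambda$ being a partition gives $f(j)-f(j-1)\geq -1$; since $f(1)\geq 0$ and $f(j_0)<0$, the minimal $j^\star$ with $f(j^\star)<0$ satisfies $f(j^\star)\leq -1$ and hence $0\leq f(j^\star-1)\leq f(j^\star)+1\leq 0$, so $f(j^\star-1)=0$ exactly. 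The factor at $(1,j^\star-1)$ is then identically zero in~$\tau$ and annihilates the whole product for that $T$. Your witness $T_0(i,j)=\mu'_j-i+1$ is a valid RT with every factor $\geq 1+(i-1)\tau\in\fpp$, and $\psi_{T_0}\in\fpp$ (the paper asserts $\psi_T\in\fpp$ for all RTs in Section~\ref{sec:mono}), so the sum lies in $\fpp$, not merely $\fp$. Dividing by $h_\mu^\monic(\overline\mu;\tau)\in\fpp$ (the case $\lambda=\mu$) finishes the argument.
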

As a corollary of the binomial formula \cref{eqn:AJ-binomial1} and the positivity, we have:
\begin{cor}
	$P_\lambda(x+\bm1;\tau)$ is Jack positive over $\fp$.	
	In particular, 
	$s_\lambda(x+\bm1)$ is Schur positive, $m_\lambda(x+\bm1)$ is monomial positive, and $e_{\lambda'}(x+\bm1)$ is elementary positive.
\end{cor}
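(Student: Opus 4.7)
The plan is to combine the binomial formula \cref{eqn:AJ-binomial1} with the positivity of the generalized binomial coefficients (\cref{prop:pos}) and the explicit product formula for $P_\mu(\bm1;\tau)$ displayed just before \cref{prop:pos}. First I would rewrite \cref{eqn:AJ-binomial1} as
\begin{equation*}
P_\lambda(x+\bm1;\tau) \;=\; \sum_{\nu} c_{\lambda\nu}(\tau)\, P_\nu(x;\tau),
\qquad c_{\lambda\nu}(\tau) \;\coloneqq\; \binom{\lambda}{\nu}_\tau \cdot \frac{P_\lambda(\bm1;\tau)}{P_\nu(\bm1;\tau)},
\end{equation*}
so that the Jack positivity of $P_\lambda(x+\bm1;\tau)$ over $\fp$ reduces to checking $c_{\lambda\nu}(\tau)\in\fp$ for every $\nu$.

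I would then verify this membership factor by factor. By \cref{prop:pos}, $\binom{\lambda}{\nu}_\tau = 0$ when $\lambda\not\supseteq\nu$ (so the term is trivially in $\fp$), and $\binom{\lambda}{\nu}_\tau\in\fpp$ otherwise. For any $\mu$ appearing in the sum, the product formula exhibits $P_\mu(\bm1;\tau)$ as a finite product of ratios of elements of $\Z_{\geq 0}[\tau]$ whose numerator and denominator both have positive leading coefficient (since $n-i+1\geq 1$ and $\mu'_j - i + 1\geq 1$ over the range $(i,j)\in\mu$), hence $P_\mu(\bm1;\tau)\in\fpp$. Because $\fp$ is closed under products and under division by elements of $\fpp$ (both facts are immediate from the definition $\fp=\Set{f/g}{f,g\in\Z_{\geq 0}[\tau],\,g\neq 0}$), it follows that $c_{\lambda\nu}(\tau)\in\fp$, which is exactly the Jack positivity statement.

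The three concrete consequences then follow by specialization. From the background section I recall $P_\lambda(x;1)=s_\lambda(x)$, $P_\lambda(x;0)=m_\lambda(x)$, and the $\tau\to\infty$ limit $P_\lambda(x;\infty)=e_{\lambda'}(x)$. Any $f\in\fp$ satisfies $f(0),f(1)\in\Q_{\geq 0}$ by direct evaluation, and $\lim_{\tau\to\infty}f(\tau)\in\Q_{\geq 0}\cup\{+\infty\}$ by comparing leading coefficients. Specializing the expansion of $P_\lambda(x+\bm1;\tau)$ in Jack polynomials at these three values of $\tau$ then yields Schur, monomial, and elementary positivity in turn.

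The main (modest) obstacle is the $\tau\to\infty$ case for elementary positivity: one must confirm that each $\lim_{\tau\to\infty}c_{\lambda\nu}(\tau)$ is actually \emph{finite}, so that the Jack expansion has a legitimate limit rather than coefficients diverging to $+\infty$. This reduces to a degree count balancing the $\tau$-degrees of $P_\lambda(\bm1;\tau)$, $P_\nu(\bm1;\tau)$, and $\binom{\lambda}{\nu}_\tau$ using the product formula together with the containment $\lambda\supseteq\nu$ forced by \cref{prop:pos}. Once this bookkeeping is done, the rest of the argument is pure substitution.
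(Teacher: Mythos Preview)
Your argument is correct and is exactly the approach the paper intends: the corollary is stated immediately after \cref{prop:pos} with the sentence ``As a corollary of the binomial formula \cref{eqn:AJ-binomial1} and the positivity,'' and your rewriting $c_{\lambda\nu}(\tau)=\binom{\lambda}{\nu}_\tau\cdot P_\lambda(\bm1;\tau)/P_\nu(\bm1;\tau)\in\fp$ is precisely the unpacking of that sentence. One small simplification for the $\tau\to\infty$ case: rather than a direct degree count on $c_{\lambda\nu}(\tau)$, you can observe that both $P_\lambda(x+\bm1;\tau)$ and each $P_\nu(x;\tau)$ are regular at $\alpha=1/\tau=0$ and that the $P_\nu(x;\infty)=e_{\nu'}(x)$ still form a basis, so the expansion coefficients $c_{\lambda\nu}$ are automatically regular there as well.
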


\section{Proof of \cref{thm:2}}\label{sec:mono}
After the positivity of the binomial coefficients, one asks for the monotonicity, i.e., \cref{thm:2}.
In this section, we give a sketch of its proof. See \cite{CS24} for full details.

We may assume that $\lambda$ is obtained from $\mu$ by adding one box in row $i_0$.
The normalizing factor $H_\nu(\tau) \coloneqq h_\nu^\monic(\overline\nu;\tau)$ and the weight $\psi_T(\tau)$ can be easily seen to be in $\fpp$.
Now apply the tableau-sum formula (\cref{prop:tab}) to write $h_\nu^\monic(\overline\lambda;\tau)-h_\nu^\monic(\overline\mu;\tau)$ as a sum over RTs.
For each $T$, the factors for $\lambda$ and $\mu$ are almost identical, apart from the ones with $T(s)=i_0$. 
For such $s\in\nu$, one can show that the difference is positive. 

\section{Symmetric function inequalities}\label{sec:ineq}
Using the binomial formula \cref{eqn:AJ-binomial1} and the monotonicity (\cref{thm:2}), we can find many interesting inequalities for symmetric functions.
\begin{thm}\label{thm:CS-ineq}
	Fix $\tau_0\in[0,\infty]$.
	The following statements are equivalent:
	\begin{enumerate}
		\item $\lambda\supseteq\mu$.
		\item $\displaystyle \frac{P_\lambda(x+\bm1;\tau)}{P_\lambda(\bm1;\tau)}-\frac{P_\mu(x+\bm1;\tau)}{P_\mu(\bm1;\tau)}$ is \textbf{Jack positive} over $\fp$.
		\item $\displaystyle \frac{P_\lambda(x+\bm1;\tau_0)}{P_\lambda(\bm1;\tau_0)}-\frac{P_\mu(x+\bm1;\tau_0)}{P_\mu(\bm1;\tau_0)}$, is $\tau_0$-\textbf{Jack positive} over $\R_{\geq0}$ (or over $\Q_{\geq0}$ if $\tau_0\in\Q$).
		
		As special cases of (3), 
		$\displaystyle \frac{s_\lambda(x+\bm1)}{s_\lambda(\bm1)}-\frac{s_\mu(x+\bm1)}{s_\mu(\bm1)}$ is \textbf{Schur positive},
		$\displaystyle \frac{m_\lambda(x+\bm1)}{m_\lambda(\bm1)}-\frac{m_\mu(x+\bm1)}{m_\mu(\bm1)}$ is \textbf{monomial positive},
		$\displaystyle \frac{Z_\lambda(x+\bm1)}{Z_\lambda(\bm1)}-\frac{Z_\mu(x+\bm1)}{Z_\mu(\bm1)}$ is \textbf{Zonal positive}, and
		$\displaystyle \frac{e_{\lambda'}(x+\bm1)}{e_{\lambda'}(\bm1)}-\frac{e_{\mu'}(x+\bm1)}{e_{\mu'}(\bm1)}$ is \textbf{elementary positive} (when expressed in $\Set{e_{\nu'}}{\nu\in\mathcal P_n}$).
		\item $\displaystyle \frac{p_{\lambda}(x+\bm1)}{p_{\lambda}(\bm1)}-\frac{p_{\mu}(x+\bm1)}{p_{\mu}(\bm1)}$ is \textbf{power-sum positive} (when expressed in $\Set{p_{\nu'}}{\nu\in\mathcal P_n}$).
	\end{enumerate}
\end{thm}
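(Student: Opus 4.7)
The plan is to bootstrap from \cref{thm:2} and the binomial formula \eqref{eqn:AJ-binomial1}, using the extra vanishing of \cref{prop:extra} for the reverse implications, and to handle (4) by a direct computation on power-sums.

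For (1) $\Longrightarrow$ (2), I would subtract the two binomial expansions to get
\begin{align*}
	\frac{P_\lambda(x+\bm1;\tau)}{P_\lambda(\bm1;\tau)}-\frac{P_\mu(x+\bm1;\tau)}{P_\mu(\bm1;\tau)} = \sum_\nu \left(\binom{\lambda}{\nu}_\tau-\binom{\mu}{\nu}_\tau\right)\frac{P_\nu(x;\tau)}{P_\nu(\bm1;\tau)}.
\end{align*}
Each coefficient lies in $\fp$ by \cref{thm:2}, and $P_\nu(\bm1;\tau)\in\fpp$ by the product formula recalled in \cref{sec:background}, so the right-hand side is a genuine $\fp$-positive Jack expansion. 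For (2) $\Longrightarrow$ (3), specializing $\tau$ to $\tau_0$ preserves positivity because elements of $\fp$ evaluate to nonnegative reals at $\tau_0\in[0,\infty)$; the named special cases follow from $P_\lambda(x;\tau)=m_\lambda,s_\lambda,Z_\lambda$ at $\tau=0,1,\tfrac12$ respectively. The case $\tau_0=\infty$ needs a limiting argument: one divides numerator and denominator of the ratio by the common leading power of $\tau$ so that $P_\lambda(x;\tau)/P_\lambda(\bm1;\tau)\to e_{\lambda'}(x)/e_{\lambda'}(\bm1)$, and positivity is inherited in the limit.

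For the converses (2) $\Longrightarrow$ (1) and (3) $\Longrightarrow$ (1), suppose $\lambda\not\supseteq\mu$ and extract the coefficient of $P_\mu(x;\tau)/P_\mu(\bm1;\tau)$ in the expansion above: by \cref{prop:extra} we have $\binom{\lambda}{\mu}_\tau=0$, while $\binom{\mu}{\mu}_\tau=1$ by the unital normalization, so this coefficient is $-1$. This value persists under any specialization $\tau_0$ and, by linear independence of the bases $\{P_\nu\}$, $\{s_\nu\}$, $\{m_\nu\}$, $\{Z_\nu\}$, and $\{e_{\nu'}\}$, obstructs the claimed positivity. This closes (1)--(3). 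For (1) $\Longleftrightarrow$ (4), the argument is independent of Jack theory and rests on the scalar identity $p_r(x+\bm1)=\sum_{k=0}^r\binom{r}{k}p_k(x)$ with $p_0=n$, giving
\begin{align*}
	\frac{p_\lambda(x+\bm1)}{p_\lambda(\bm1)} = \prod_{i=1}^{\ell(\lambda)}\left(1+\sum_{k=1}^{\lambda_i}\binom{\lambda_i}{k}\frac{p_k(x)}{n}\right),
\end{align*}
and similarly for $\mu$; under $\lambda\supseteq\mu$, the factor-by-factor domination $\binom{\lambda_i}{k}\geq\binom{\mu_i}{k}$ forces nonnegativity of the difference in the target basis $\{p_{\nu'}:\nu\in\mathcal P_n\}$, while the converse tracks the leading coefficient $\prod_i\binom{\lambda_i}{\mu_i}$, which vanishes as soon as some $\mu_i>\lambda_i$ and yields a negative contribution that cannot be cancelled.

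The main obstacle will be the $\tau_0=\infty$ specialization in (3), where the Jack normalization blows up: the fix is to divide the ratio by the common leading power of $\tau$ in numerator and denominator before taking the limit, so the $\fp$-positivity descends cleanly to elementary positivity in the target basis $\{e_{\nu'}\}$. A secondary subtlety arises in (4) when $\lambda$ has parts larger than $n$: the reduction of $p_k$ with $k>n$ to the generating set $p_1,\dots,p_n$ via Newton-type identities is not automatically sign-preserving, so one must organize the product above using the intrinsic structure of the binomial factors rather than blindly substituting, verifying inductively that each factor already lies in $\mathbb Z_{\geq0}[p_1,\dots,p_n]$ before the subtraction.
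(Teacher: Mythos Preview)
Your treatment of the equivalence (1)$\Leftrightarrow$(2)$\Leftrightarrow$(3) is exactly the paper's: subtract the two binomial expansions, invoke \cref{thm:2} for the forward direction, specialize $\tau\mapsto\tau_0$ for (2)$\Rightarrow$(3), and for the reverse read off the coefficient of $P_\mu$, which is $\binom{\lambda}{\mu}_\tau-\binom{\mu}{\mu}_\tau=0-1$ by \cref{prop:extra} and the unital normalization. The paper is terser about $\tau_0=\infty$ (it simply says elements of $\fp$ evaluate nonnegatively on $[0,\infty]$), so your limiting remark is extra care rather than a different idea.

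For (4) the paper defers entirely to \cite{CS24}, and your scalar identity $p_r(x+\bm1)=\sum_{k}\binom{r}{k}p_k(x)$ together with the telescoping comparison $\binom{\lambda_i}{k}\geq\binom{\mu_i}{k}$ is almost certainly what is done there. The genuine problem is your proposed fix for the ``secondary subtlety''. The claim that each factor $p_{\lambda_i}(x+\bm1)/n$ already lies in $\Z_{\geq0}[p_1,\dots,p_n]$ is false: for $n=2$ and $\lambda_i=3$ one has $p_3=\tfrac12(3p_1p_2-p_1^3)$ in $\Lambda_2$, so the factor carries a $-\tfrac14$ in front of $p_1^3$, and no reorganization can change this since the expansion in the basis $\{p_{\nu'}:\nu\in\mathcal P_2\}$ is unique. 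Worse, the same example with $\lambda=(3)$, $\mu=(2)$ gives
\[
\frac{p_{(3)}(x+\bm1)}{2}-\frac{p_{(2)}(x+\bm1)}{2}
=\frac{1}{4}\bigl(3p_1p_2-p_1^3+4p_2+2p_1\bigr),
\]
which has a negative $p_1^3$-coefficient. So if (4) is read literally in the $n$-variable ring, it fails; the intended reading treats the $p_k$ as free generators (no Newton relations imposed), and in that reading your telescoping argument is already complete and the subtlety you flag does not arise. Either way, drop the inductive claim about $\Z_{\geq0}[p_1,\dots,p_n]$.
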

\begin{proof}
	Note that $P_\lambda(\bm1;\tau)\in\fpp$ and $P_\lambda(\bm1;\tau_0)>0$.
	
	We first show that (1)$\implies$(2).
	If $\lambda\supseteq\mu$, then by the binomial formula \cref{eqn:AJ-binomial1},
	\begin{align*}
		\frac{P_\lambda(x+\bm1;\tau)}{P_\lambda(\bm1;\tau)} -\frac{P_\mu(x+\bm1;\tau)}{P_\mu(\bm1;\tau)} = \sum_{\nu} \(\binom{\lambda}{\nu}_\tau -\binom{\mu}{\nu}_\tau\) \frac{P_\nu(x;\tau)}{P_\nu(\bm1;\tau)}.
	\end{align*}
	The coefficient $\binom{\lambda}{\nu}_\tau -\binom{\mu}{\nu}_\tau$ is in $\fp$ by \cref{thm:2}.
	
	(2)$\implies$(3) is clear since functions in $\fp$ have non-negative evaluation at $\tau_0\in[0,\infty]$.
	
	(3)$\implies$(1): Assume that $\lambda\not\supset\mu$, then the difference $\frac{P_\lambda(x+\bm1;\tau)}{P_\lambda(\bm1;\tau)} -\frac{P_\mu(x+\bm1;\tau)}{P_\mu(\bm1;\tau)}$ would contain the term $-\frac{P_\mu(x;\tau_0)}{P_\mu(\bm1;\tau_0)}$, contradicting (3).
	
	(1)$\iff$(4) is some easy computation, see \cite[Theorem 6.4]{CS24}.
\end{proof}

These inequalities are related to the ones studied by Muirhead, Cuttler--Greene--Skandera, Sra and Khare--Tao \cite{Muirhead,CGS11,Sra16,KT21}, which we now recall.
\begin{prop}\label{prop:CGS}
	Suppose $|\lambda|=|\mu|$. The following are equivalent:
		\begin{multicols}{2}
		\begin{enumerate}
			\item $\lambda$ dominates $\mu$.
			\item  $\dfrac{m_\lambda(x)}{m_\lambda(\bm1)}-\dfrac{m_\mu(x)}{m_\mu(\bm1)}\geq0$ for $x\in[0,\infty)^n$.
			\item  $\dfrac{e_{\lambda'}(x)}{e_{\lambda'}(\bm1)}-\dfrac{e_{\mu'}(x)}{e_{\mu'}(\bm1)}\geq0$ for $x\in[0,\infty)^n$.
			\item  $\dfrac{p_\lambda(x)}{p_\lambda(\bm1)}-\dfrac{p_\mu(x)}{p_\mu(\bm1)}\geq0$ for $x\in[0,\infty)^n$.
			\item  $\dfrac{s_\lambda(x)}{s_\lambda(\bm1)}-\dfrac{s_\mu(x)}{s_\mu(\bm1)}\geq0$ for $x\in[0,\infty)^n$.
			\item[\vspace{\fill}]
		\end{enumerate}			
		\end{multicols}
\end{prop}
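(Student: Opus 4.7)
The plan is to prove the forward implications $(1) \Rightarrow (j)$ for $j = 2, 3, 4, 5$ via standard majorization techniques, and then close the loop by separate specializations establishing each $(j) \Rightarrow (1)$.

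For the forward direction, I would first invoke the classical fact that $\lambda$ dominates $\mu$ precisely when $\mu$ is obtained from $\lambda$ by a finite sequence of elementary ``Robin Hood'' moves, each replacing two parts $a > b+1$ by $a-1, b+1$. The implication $(1) \Rightarrow (2)$ (Muirhead's inequality) then reduces to the two-variable identity
\begin{align*}
x^a y^b + x^b y^a - x^{a-1} y^{b+1} - x^{b+1} y^{a-1} = (xy)^b (x - y)(x^{a-b-1} - y^{a-b-1}) \geq 0
\end{align*}
on $[0,\infty)^2$, after accounting for the multiplicities in the normalization $m_\lambda(\bm{1})$. For (3) and (4), I would use the factorizations $e_\lambda/e_\lambda(\bm{1}) = \prod_i e_{\lambda_i}/\binom{n}{\lambda_i}$ and $p_\lambda/p_\lambda(\bm{1}) = \prod_i p_{\lambda_i}/n$, then invoke Newton's log-concavity for $k \mapsto e_k/\binom{n}{k}$ and the convexity of $k \mapsto \log(p_k(x)/n)$, combined with Karamata's inequality for majorization-monotone convex functions. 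This is essentially the Cuttler--Greene--Skandera strategy. For (5), I would appeal to Sra's proof, which uses the HCIZ integral representation of $s_\lambda(x)/s_\lambda(\bm{1})$ as an average over a coadjoint orbit of $\mathrm{U}(n)$ combined with AM--GM applied to the eigenvalues.

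For each converse $(j) \Rightarrow (1)$, I would assume $\lambda$ does not dominate $\mu$, so there is a smallest $k$ with $\lambda_1 + \cdots + \lambda_k < \mu_1 + \cdots + \mu_k$. For $f \in \{m, e, s\}$, substituting $x_1 = \cdots = x_k = t$ and $x_{k+1} = \cdots = x_n = 1$ and letting $t \to \infty$ shows that $f_\lambda(x)/f_\lambda(\bm{1})$ has positive leading term of degree $\lambda_1 + \cdots + \lambda_k$ in $t$, so the $\mu$-term dominates and the difference is eventually negative. The power-sum case needs a different substitution since all $p_\lambda$ with $|\lambda|$ fixed share leading $t$-degree $|\lambda|$ under this test; here one can use the principal specialization $x_j = q^{j-1}$, whose leading $q$-behavior detects $\ell(\lambda)$ and finer invariants and thus distinguishes non-majorizing pairs. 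The hard part of the whole argument is clearly $(1) \Rightarrow (5)$: unlike (2)--(4), the Schur case does not reduce by the elementary-move trick---expanding $s_\lambda$ in the monomial basis introduces Kostka numbers that obscure the signs needed for monotonicity---so an analytic input such as the HCIZ integral appears to be essential.
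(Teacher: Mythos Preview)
The paper does not actually supply a proof of this proposition: it is stated and attributed to Muirhead, Cuttler--Greene--Skandera, Sra, and Khare--Tao, with no argument given. So there is no ``paper's own proof'' to compare against; your outline is in fact a reconstruction of the approaches in those cited sources, and for the forward implications it matches them faithfully (Robin--Hood reduction for Muirhead, Newton log-concavity plus Karamata for $e_{\lambda'}$, log-sum-exp convexity plus Karamata for $p_\lambda$, and Sra's HCIZ argument for $s_\lambda$). The specialization $x_1=\cdots=x_k=t$, $x_{k+1}=\cdots=x_n=1$ for the converses $(2),(3),(5)\Rightarrow(1)$ is also correct: in each of these three bases the leading $t$-degree of $f_\lambda(x)/f_\lambda(\bm1)$ is exactly $\lambda_1+\cdots+\lambda_k$ with positive leading coefficient (for $e_{\lambda'}$ this uses the identity $\sum_j\min(\lambda'_j,k)=\lambda_1+\cdots+\lambda_k$).

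There is, however, a genuine gap in your treatment of $(4)\Rightarrow(1)$. The principal specialization $x_j=q^{j-1}$ does \emph{not} separate partitions by leading $q$-behavior: one has $p_r(1,q,\dots,q^{n-1})=1+q^r+\cdots+q^{(n-1)r}$, whose top $q$-degree is $(n-1)r$, so $p_\lambda$ has top degree $(n-1)|\lambda|$ for \emph{every} partition of size $|\lambda|$---exactly the same obstruction you already noted for the $(t,\dots,t,1,\dots,1)$ substitution. Saying the specialization ``detects $\ell(\lambda)$ and finer invariants'' is not an argument; detecting $\ell(\lambda)$ alone is far from recovering majorization. You need a different idea here, for instance showing that the family of convex functions $r\mapsto\log\bigl(p_r(x)/n\bigr)$, as $x$ ranges over $[0,\infty)^n$, is rich enough to approximate the piecewise-linear test functions $r\mapsto(r-a)_+$ that characterize majorization via Hardy--Littlewood--P\'olya, or else appeal directly to the argument in \cite{CGS11}.
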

\begin{prop}\label{prop:KT}
	The following are equivalent:
	\begin{multicols}{2}
	\begin{enumerate}
		\item $\lambda$ weakly dominates $\mu$.
		\item $\dfrac{s_\lambda(x)}{s_\lambda(\bm1)}-\dfrac{s_\mu(x)}{s_\mu(\bm1)}\geq0$ for $x\in[1,\infty)^n$.
	\end{enumerate}
	\end{multicols}
\end{prop}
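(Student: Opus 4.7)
My plan is to prove the two implications separately, using the two positivity results (Theorem~\ref{thm:CS-ineq} and Proposition~\ref{prop:CGS}) in tandem for the forward direction, and a leading-term argument for the converse.

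For $(1)\Rightarrow(2)$, the key combinatorial input is that weak dominance factors as dominance composed with containment: if $\lambda\wm\mu$, there exists a partition $\nu$ with $\nu\subseteq\lambda$, $|\nu|=|\mu|$, and $\nu\geq\mu$ in dominance order. I would produce $\nu$ by iteratively removing the corner box at $(\ell(\lambda),\lambda_{\ell(\lambda)})$; each such removal preserves weak dominance over $\mu$, because the partial sums of $\lambda$ are equal to $|\lambda|$ from row $\ell(\lambda)$ onward and therefore remain at least the partial sums of $\mu$ after one subtraction. After $|\lambda|-|\mu|$ removals, $\nu$ has size $|\mu|$ and still weakly dominates $\mu$, so in fact dominates it. Given such $\nu$, the Schur case of Theorem~\ref{thm:CS-ineq}(3) applied to the containment $\lambda\supseteq\nu$ gives the Schur positivity, hence the nonnegativity on $[1,\infty)^n$, of $\frac{s_\lambda(x)}{s_\lambda(\bm1)}-\frac{s_\nu(x)}{s_\nu(\bm1)}$; and Proposition~\ref{prop:CGS}(5) applied to $\nu\geq\mu$ gives the nonnegativity on $[0,\infty)^n\supseteq[1,\infty)^n$ of $\frac{s_\nu(x)}{s_\nu(\bm1)}-\frac{s_\mu(x)}{s_\mu(\bm1)}$. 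Summing the two inequalities yields (2).

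For $(2)\Rightarrow(1)$, I would argue by contrapositive. Suppose some $k$ satisfies $\lambda_1+\cdots+\lambda_k<\mu_1+\cdots+\mu_k$, and evaluate at $x(t)=(t,\dots,t,1,\dots,1)\in[1,\infty)^n$ with $k$ copies of $t\geq1$. By Proposition~\ref{prop:tab}, $s_\lambda(x(t))=\sum_T t^{d(T)}$ summed over RTs $T$ of shape $\lambda$ and rank $n$, where $d(T)=\#\{s\in\lambda:T(s)\leq k\}$. Since RTs have strictly decreasing columns, entries $\leq k$ in column $j$ can occupy at most the bottom $\min(k,\lambda'_j)$ cells, so $d(T)\leq\sum_j\min(k,\lambda'_j)=\lambda_1+\cdots+\lambda_k$, with equality achieved by filling those bottom cells with a strictly decreasing sequence from $\{1,\dots,k\}$ and the remaining top cells from $\{k+1,\dots,n\}$. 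Hence $s_\lambda(x(t))$ is a polynomial in $t$ of degree exactly $\lambda_1+\cdots+\lambda_k$ with positive leading coefficient, and likewise $s_\mu(x(t))$ has degree $\mu_1+\cdots+\mu_k$ with positive leading coefficient. Since $s_\lambda(\bm1),s_\mu(\bm1)>0$ and the $\mu$-term has strictly higher degree in $t$, the difference in (2) tends to $-\infty$ as $t\to\infty$, contradicting (2).

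The forward direction is a straightforward combination of the classical combinatorial decomposition with the two positivity inputs, and poses no serious obstacle. The main technical point is the degree-in-$t$ computation in the converse; once one confirms that the upper bound $\lambda_1+\cdots+\lambda_k$ on $d(T)$ is attained by an explicit RT (so the leading coefficient is a positive integer), the asymptotic argument is immediate.
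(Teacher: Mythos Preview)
Your proof is correct and matches the paper's approach: Proposition~\ref{prop:KT} itself is cited from Khare--Tao without a self-contained proof, but the paper's argument for Corollary~\ref{cor:KT} (whose item~(5) coincides with Proposition~\ref{prop:KT}) uses exactly your intermediate partition $\nu$ with $\lambda\supseteq\nu\m\mu$, handling the two pieces via Theorem~\ref{thm:CS-ineq} and Proposition~\ref{prop:CGS}. The converse is dismissed there as ``some degree consideration,'' which your tableau/asymptotic argument makes explicit.
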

Other such inequalities in \cref{prop:KT} follow from the previous two types \cref{thm:CS-ineq,prop:CGS}.
\begin{cor}\label{cor:KT}
	The following are equivalent:
	\begin{multicols}{2}
	\begin{enumerate}
		\item $\lambda$ weakly dominates $\mu$.
		\item $\dfrac{m_\lambda(x)}{m_\lambda(\bm1)}-\dfrac{m_\mu(x)}{m_\mu(\bm1)}\geq0$ for $x\in[1,\infty)^n$.
		\item $\dfrac{e_{\lambda'}(x)}{e_{\lambda'}(\bm1)}-\dfrac{e_{\mu'}(x)}{e_{\mu'}(\bm1)}\geq0$ for $x\in[1,\infty)^n$.
		\item $\dfrac{p_\lambda(x)}{p_\lambda(\bm1)}-\dfrac{p_\mu(x)}{p_\mu(\bm1)}\geq0$ for $x\in[1,\infty)^n$.
		\item $\dfrac{s_\lambda(x)}{s_\lambda(\bm1)}-\dfrac{s_\mu(x)}{s_\mu(\bm1)}\geq0$ for $x\in[1,\infty)^n$.
		\item[\vspace{\fill}]
	\end{enumerate}
	\end{multicols}
\end{cor}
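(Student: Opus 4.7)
The plan is to deduce the corollary from the previously proved results: Theorem~\cref{thm:CS-ineq}, which handles the containment case $\lambda\supseteq\nu$; Proposition~\cref{prop:CGS}, which handles dominance with $|\nu|=|\mu|$; and Proposition~\cref{prop:KT} for the Schur case.  The bridge between ``weakly dominates'' and these two stronger orderings is a decomposition lemma.

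Structural lemma: $\lambda\wm\mu$ iff there exists a partition $\nu$ with $\lambda\supseteq\nu$ and $\nu\geq\mu$.  The $\Leftarrow$ direction is immediate.  For $\Rightarrow$, I would give a greedy construction removing $|\lambda|-|\mu|$ boxes from $\lambda$ one at a time (from the rightmost available row); the partial-sum inequalities of weak dominance provide exactly the slack needed to keep every intermediate shape a partition that still dominates $\mu$.  Given the lemma, the forward direction $(1)\Rightarrow(k)$ for $k\in\{2,3,4,5\}$ is a two-step telescope.  Letting $f$ stand for $m$, $e_{(-)'}$, $p$, or $s$, write
\begin{align*}
\frac{f_\lambda(x)}{f_\lambda(\bm1)}-\frac{f_\mu(x)}{f_\mu(\bm1)}
=\left(\frac{f_\lambda(x)}{f_\lambda(\bm1)}-\frac{f_\nu(x)}{f_\nu(\bm1)}\right)+\left(\frac{f_\nu(x)}{f_\nu(\bm1)}-\frac{f_\mu(x)}{f_\mu(\bm1)}\right).
\end{align*}
The first bracket is $\geq0$ on $[1,\infty)^n$ by Theorem~\cref{thm:CS-ineq} applied to the pair $\lambda\supseteq\nu$ (substitute $x=y+\bm1$ with $y\in[0,\infty)^n$ and evaluate the expansion-positive expression).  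The second bracket is $\geq0$ on $[0,\infty)^n\supseteq[1,\infty)^n$ by Proposition~\cref{prop:CGS} applied to $\nu\geq\mu$.  Adding gives the desired inequality on $[1,\infty)^n$.

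For the converse $(k)\Rightarrow(1)$, case $k=5$ is exactly Proposition~\cref{prop:KT}.  For $k=2,3$, I would use the exponential specialization $x_j=e^{vw_j}$ with weights $w_1\geq\cdots\geq w_n\geq0$ and let $v\to\infty$.  A direct asymptotic computation gives $\log m_\lambda(e^{vw})\sim v\,(\lambda\cdot w)$ and, via the conjugate identity $|\{i:\lambda'_i\geq j\}|=\lambda_j$, also $\log e_{\lambda'}(e^{vw})\sim v\,(\lambda\cdot w)$, so the hypothesis forces $\lambda\cdot w\geq\mu\cdot w$ for every admissible $w$.  Choosing the test weights $w=(1^j,0^{n-j})$ for $j=1,\dots,n$ extracts exactly the partial-sum inequalities defining $\lambda\wm\mu$, via Abel summation.

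The main obstacle is the converse for $(4)$.  The product structure $p_\lambda=\prod_i p_{\lambda_i}$ means the exponential substitution only ``sees'' the largest variable asymptotically, so the same argument yields only $|\lambda|\geq|\mu|$.  For the case $|\lambda|=|\mu|$, I would exploit the fact that $p_\lambda$ is homogeneous of degree $|\lambda|$: non-negativity of the difference on $[1,\infty)^n$ then extends by scaling $x\mapsto tx$ to all of $(0,\infty)^n$ and thence by continuity to $[0,\infty)^n$, reducing to Proposition~\cref{prop:CGS}.  For $|\lambda|>|\mu|$ the homogeneity shortcut fails and I expect the argument to require finer specializations --- for instance $x=(t,\dots,t,1,\dots,1)$ with varying numbers of $t$'s --- together with a careful tracking of subleading corrections in order to recover the full system of partial-sum inequalities; this is the step I foresee as most delicate.
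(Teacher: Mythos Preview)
Your forward argument $(1)\Rightarrow(k)$ is exactly the paper's: choose $\nu$ with $\lambda\supseteq\nu$ and $\nu\m\mu$, then telescope using \cref{thm:CS-ineq} for the first bracket and \cref{prop:CGS} for the second. For the converses $(2),(3),(5)\Rightarrow(1)$ the paper says only ``by some degree consideration''; your exponential specialisation $x_j=e^{vw_j}$ is the natural way to make that phrase precise, and it succeeds for $m_\lambda$, $e_{\lambda'}$, $s_\lambda$ because each has leading lex monomial $x^\lambda$. On all of these parts your proposal and the paper's proof coincide.

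Your hesitation about $(4)\Rightarrow(1)$ is not a defect of your argument but of the statement: that implication is \emph{false}. Take $n=2$, $\lambda=(2,2)$, $\mu=(3)$; then $\lambda_1=2<3=\mu_1$, so $\lambda\not\wm\mu$, yet
\[
\frac{p_\lambda(x)}{p_\lambda(\bm1)}-\frac{p_\mu(x)}{p_\mu(\bm1)}
=\frac{(x_1^2+x_2^2)^2-2(x_1^3+x_2^3)}{4}\geq0
\quad\text{for all }x\in[1,\infty)^2,
\]
since the numerator vanishes at $(1,1)$ and has partial derivatives $2x_i\bigl(2x_1^2+2x_2^2-3x_i\bigr)>0$ throughout $[1,\infty)^2$ (the quadratic $2t^2-3t+2$ has negative discriminant). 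The diagnosis is exactly the one you gave: $p_\lambda$ has leading monomial $x_1^{|\lambda|}$, so any asymptotic test on $[1,\infty)^n$ detects only $|\lambda|$, and when $|\lambda|>|\mu|$ the surplus degree can mask a failure of the intermediate partial sums. Your homogeneity reduction correctly rescues the equal-size case, but for $|\lambda|>|\mu|$ no ``finer specialisation'' will help. The paper's one-line ``degree consideration'' does not cover power sums, and the equivalence with $(4)$ should be dropped.
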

\begin{proof}
	For briefness, We only show (1)$\implies$(5). There exists $\nu$ such that $\lambda\supseteq\nu$ and $\nu\m\mu$.
	\begin{align*}
		\dfrac{s_\lambda(x+\bm1)}{s_\lambda(\bm1)}-\dfrac{s_\mu(x+\bm1)}{s_\mu(\bm1)}
		=\left(\dfrac{s_\lambda(x+\bm1)}{s_\lambda(\bm1)}-\dfrac{s_\nu(x+\bm1)}{s_\nu(\bm1)}\right)+\left(\dfrac{s_\nu(x+\bm1)}{s_\nu(\bm1)}-\dfrac{s_\mu(x+\bm1)}{s_\mu(\bm1)}\right).
	\end{align*}
	Both differences are positive for $x\in[0,\infty)^n$ by \cref{thm:CS-ineq,prop:KT}.
	Each of the implications (1)$\implies$(2)--(4) is similar. Each of the reverse implications (2)--(5)$\implies$(1) is by some degree consideration.
\end{proof}
Moreover, our inequalities work for Jack polynomials as well.
It is natural to extend the inequalities of the second and third types to Jack polynomials:
\begin{conj}\label{conj:CGS-Jack}
	Fix $\tau_0 \in [0,\infty]$, and suppose $|\lambda|=|\mu|$. Then the following statements are
	equivalent:
	\begin{enumerate}
		\item $\lambda$ dominates $\mu$.
		\item $\displaystyle\frac{P_\lambda(x;\tau)}{P_\lambda(\bm1;\tau)} -
		\frac{P_\mu(x;\tau)}{P_\mu(\bm1;\tau)}\in\fp^{\mathbb R}=\{f/g\mid f(\tau)\in\R_{\geq0}[\tau],g\in\Z_{\geq0}[\tau]\setminus 0\}$, for $x\in[0,\infty)^n$.
		\item $\displaystyle\frac{P_\lambda(x;\tau_0)}{P_\lambda(\bm1;\tau_0)} -
		\frac{P_\mu(x;\tau_0)}{P_\mu(\bm1;\tau_0)}\geqslant 0$, for $x\in[0,\infty)^n$.
	\end{enumerate}
\end{conj}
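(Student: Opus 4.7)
The plan is to establish the cycle (2)~$\implies$~(3)~$\implies$~(1)~$\implies$~(2), paralleling the structure of \cref{thm:CS-ineq}, with (1)~$\implies$~(2) being the substantive direction. The implication (2)~$\implies$~(3) is immediate by specialization at $\tau=\tau_0\in[0,\infty]$, using that elements of $\fp^{\mathbb R}$ are non-negative there and that $P_\lambda(\bm 1;\tau_0)>0$. For (3)~$\implies$~(1), I would adapt the degree-type argument sketched in the proof of \cref{cor:KT}: writing each Jack polynomial in the monomial basis via unitriangularity $P_\lambda=m_\lambda+\sum_{\nu<\lambda}u_{\lambda\nu}m_\nu$, the difference $\tfrac{P_\lambda(x;\tau_0)}{P_\lambda(\bm 1;\tau_0)}-\tfrac{P_\mu(x;\tau_0)}{P_\mu(\bm 1;\tau_0)}$ contains the term $-\tfrac{m_\mu(x)}{P_\mu(\bm 1;\tau_0)}$, which, if $\lambda$ does not dominate $\mu$, cannot be cancelled by any contribution from $P_\lambda$; choosing $x$ near a suitable face of the positive orthant that isolates $m_\mu$ yields a negative value, contradicting (3). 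Homogeneity from $|\lambda|=|\mu|$ keeps this a genuine coefficient comparison rather than an asymptotic one.

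The hard direction is (1)~$\implies$~(2). A natural first reduction is to the case where $\mu$ is covered by $\lambda$ in the dominance order, so that $\mu$ is obtained from $\lambda$ by a single Muirhead-type $T$-transform moving one unit from row $i$ to a later row $j>i$. For each such cover one would then attempt one of: (a) an \emph{integral-representation} approach, using a Jack-deformed Harish-Chandra--Itzykson--Zuber or Macdonald--Mehta--Kadell integral to write $P_\lambda(x;\tau)/P_\lambda(\bm 1;\tau)$ as an average of a simpler kernel, and extending Sra's Schur proof via a Jack-adapted AM--GM-type inequality; (b) a \emph{Pieri/recursive} approach, exploiting the row-stripping structure of Jack polynomials to expand both ratios and exhibit an explicit positive combination; (c) a \emph{binomial-inversion} approach, formally applying $x\mapsto x-\bm 1$ to \cref{thm:CS-ineq} and extracting the conjectured inequality from the shifted positivity via \cref{thm:2}, together with careful sign bookkeeping for the inverse shift.

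The main obstacle, and the reason this statement remains a conjecture, is precisely the elementary $T$-transform step. In the Schur case $\tau_0=1$, Sra represents $s_\lambda(x)/s_\lambda(\bm 1)$ as an average over the unitary group via HCIZ and then invokes AM--GM on a pure-exponential integrand. At general $\tau_0$, integral representations are available (Macdonald--Mehta, Kadell, Okounkov--Olshanski), but the integrand is no longer a product of exponentials and the AM--GM step does not generalize cleanly. The shift-based methods of this paper give positivity after $x\mapsto x+\bm 1$ in the cone $\fp$, which is stronger in one sense but does not by itself deliver positivity on the full closed orthant $[0,\infty)^n$ demanded by (2); extracting the unshifted inequality from the shifted one would require cancelling the negative contributions produced by $x\mapsto x-\bm 1$, and there is no obvious combinatorial mechanism to do so. Any successful proof will likely require either a new Jack-tailored inequality replacing AM--GM, or a combinatorial/interpolation-theoretic argument that bypasses integration entirely.
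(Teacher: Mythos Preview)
Your proposal is well-aligned with the paper. Note that this statement is labeled a \emph{conjecture}: the paper does not prove it. The only remark the paper makes is that ``(2)$\implies$(3)$\implies$(1) in both conjectures are clear,'' and that (1)$\implies$(2) of this conjecture would imply the corresponding direction of \cref{conj:KT-Jack}. You correctly reproduce this structure: your (2)$\implies$(3) by specialization and your (3)$\implies$(1) via unitriangularity and a dominance/degree argument are exactly the ``clear'' implications, and you rightly isolate (1)$\implies$(2) as the open, substantive direction. Your discussion of why the Sra/HCIZ argument does not extend to general $\tau$ and why the shift-based monotonicity of \cref{thm:2} does not directly yield the unshifted inequality is accurate and matches the reason the statement remains conjectural. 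There is nothing to correct; just be aware that no proof of (1)$\implies$(2) exists in the paper to compare against.
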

\begin{conj}\label{conj:KT-Jack}
	Fix $\tau_0 \in [0,\infty]$. The following statements are equivalent:
	\begin{enumerate}
		\item $\lambda$ weakly dominates $\mu$.
		\item $\displaystyle\frac{P_\lambda(x;\tau)}{P_\lambda(\bm1;\tau)} -
		\frac{P_\mu(x;\tau)}{P_\mu(\bm1;\tau)}\in\fp^{\mathbb R}$, for $x\in[1,\infty)^n$.
		\item $\displaystyle\frac{P_\lambda(x;\tau_0)}{P_\lambda(\bm1;\tau_0)} -
		\frac{P_\mu(x;\tau_0)}{P_\mu(\bm1;\tau_0)}\geqslant 0$, for $x\in[1,\infty)^n$.
	\end{enumerate}
\end{conj}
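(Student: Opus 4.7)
The natural plan is to extend the argument of \cref{cor:KT} from Schur to Jack polynomials: \cref{conj:KT-Jack} should follow from \cref{conj:CGS-Jack} together with the already-established \cref{thm:CS-ineq}, via the same splitting trick used in the Schur case. Thus, modulo CGS-Jack, the conjecture reduces to routine verifications.

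The implication (2)$\implies$(3) is immediate, since every element of $\fp^{\mathbb R}$ has non-negative value at each $\tau_0\in[0,\infty]$. For (1)$\implies$(2), suppose $\lambda\wm\mu$. A standard majorization fact produces a partition $\nu$ with $\lambda\supseteq\nu$, $|\nu|=|\mu|$, and $\nu\m\mu$ (obtained by deleting corner boxes of $\lambda$ one at a time while preserving weak majorization over $\mu$, stopping when the total size drops to $|\mu|$). Write
\begin{align*}
\frac{P_\lambda(x;\tau)}{P_\lambda(\bm1;\tau)}-\frac{P_\mu(x;\tau)}{P_\mu(\bm1;\tau)}
&=\left[\frac{P_\lambda(x;\tau)}{P_\lambda(\bm1;\tau)}-\frac{P_\nu(x;\tau)}{P_\nu(\bm1;\tau)}\right]
\\&\quad+\left[\frac{P_\nu(x;\tau)}{P_\nu(\bm1;\tau)}-\frac{P_\mu(x;\tau)}{P_\mu(\bm1;\tau)}\right].
\end{align*}
For $x\in[1,\infty)^n$, substitute $x=y+\bm1$ with $y\in[0,\infty)^n$. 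By \cref{thm:CS-ineq} the first bracket is Jack positive over $\fp$ in $y$; combined with the monomial positivity of Jack polynomials (the coefficients $u_{\rho\sigma}(\tau)$ lie in $\fp$) and the fact that $P_\rho(\bm1;\tau)\in\fpp$, each term $P_\rho(y;\tau)/P_\rho(\bm1;\tau)$ lies in $\fp^{\mathbb R}$ for $y\in[0,\infty)^n$, so the first bracket belongs to $\fp^{\mathbb R}$. The second bracket is in $\fp^{\mathbb R}$ on $[0,\infty)^n\supseteq[1,\infty)^n$ by \cref{conj:CGS-Jack}. Summing gives (2).

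For (3)$\implies$(1), I would argue by asymptotics. If $\sum_{i\leq k}\lambda_i<\sum_{i\leq k}\mu_i$ for some minimal $k$, specialize $x_i=t$ for $i\leq k$ and $x_i=1$ otherwise (which lies in $[1,\infty)^n$ for $t\geq1$), and let $t\to\infty$. Using the unitriangular expansion $P_\lambda=m_\lambda+\sum_{\rho<\lambda}u_{\lambda\rho}m_\rho$ with $u_{\lambda\rho}\in\fp$, the leading power of $t$ in $P_\lambda(x;\tau_0)/P_\lambda(\bm1;\tau_0)$ is $t^{\lambda_1+\cdots+\lambda_k}$ with a positive coefficient (for $\mu<\lambda$ in dominance, $\mu_1+\cdots+\mu_k\leq\lambda_1+\cdots+\lambda_k$, so only $m_\lambda$ contributes at top order). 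Hence the $-P_\mu/P_\mu(\bm1)$ term dominates for large $t$, contradicting (3).

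\textbf{Main obstacle.} The substantive content is concentrated entirely in \cref{conj:CGS-Jack}. The Schur case was established by Sra via the HCIZ integral and an AM-GM argument, and no direct $\tau$-deformation of that proof is visible; the HCIZ formula is tied to specific values of the Dyson index. A Jack analog will plausibly require either a suitable $\beta$-deformation of HCIZ whose integrand admits a Jack positivity expansion, or a purely combinatorial argument built on the tableau-sum formula (\cref{prop:tab}) and the Knop--Sahi interpolation machinery underlying \cref{prop:pos,thm:2}. This step looks genuinely hard and is, in my view, the central open problem left by the present paper.
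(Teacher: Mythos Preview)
Your proposal aligns exactly with what the paper itself says about this conjecture: the paper remarks that (2)$\implies$(3)$\implies$(1) are clear and that (1)$\implies$(2) would follow from \cref{conj:CGS-Jack} via the splitting argument of \cref{cor:KT}, which is precisely your reduction. Your identification of \cref{conj:CGS-Jack} as the sole substantive obstacle is also the paper's position. One small imprecision in your (3)$\implies$(1) step: it is not true that ``only $m_\lambda$ contributes at top order'', since $m_\rho$ with $\rho<\lambda$ and $\rho_1+\cdots+\rho_k=\lambda_1+\cdots+\lambda_k$ can also contribute; however, those extra contributions carry coefficients $u_{\lambda\rho}(\tau_0)\geq0$, so the leading coefficient remains strictly positive and your conclusion stands.
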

Note that (2)$\implies$(3)$\implies$(1) in both conjectures are clear; and the implication (1)$\implies$(2) of \cref{conj:CGS-Jack} implies that of \cref{conj:KT-Jack} via the argument of \cref{cor:KT}. 

\begin{example}
	To illustrate Sra's inequality and the first conjecture, let $n=2$,  $\lambda=(4,0)$ and $\mu=(3,1)$, then we have 
	\begin{align*}
		\dfrac{s_\lambda(x)}{s_\lambda(\bm1)}-\dfrac{s_\mu(x)}{s_\mu(\bm1)}
		&=\frac{1}{15}(x_1-x_2)^2(3x_1^2+4x_1x_2+3x_2^2),	\\
		\frac{P_\lambda(x;\tau)}{P_\lambda(\bm1;\tau)} -
		\frac{P_\mu(x;\tau)}{P_\mu(\bm1;\tau)}
		&=\frac{(\tau+3)(x_1-x_2)^2}{4(2\tau+1)(2\tau+3)} \(\tau(x_1+x_2)^2+2(x_1^2+x_1x_2+x_2^2)\).
	\end{align*}
	Clearly both are positive when evaluating at $[0,\infty)^n$.
\end{example}

The authors, together with Apoorva Khare, are currently working on these conjectures and some further generalizations in \cite{CKS}.

\acknowledgements{We thank Apoorva Khare for many helpful discussions on the last section. The first author would like to thank Swee Hong Chan for his suggestions on the manuscript.}
%% if you use biblatex then this generates the bibliography
%% if you use some other method then remove this and do it your own way
\printbibliography
\end{document}